\providecommand{\U}[1]{\protect\rule{.1in}{.1in}}
\newtheorem{theorem}{Theorem}
\newtheorem{corollary}[theorem]{Corollary}
\newtheorem{example}[theorem]{Example}
\newtheorem{lemma}[theorem]{Lemma}
\newtheorem{proposition}[theorem]{Proposition}
\newtheorem{remark}[theorem]{Remark}
\newenvironment{proof}[1][Proof]{\noindent\textbf{#1.} }{\ \rule{0.5em}{0.5em}}
\begin{document}

\title{\textbf{Equivariant prequantization and the moment map}}
\date{}
\author{\textsc{Roberto Ferreiro P\'{e}rez}\\Departamento de Econom\'{\i}a Financiera y Actuarial y Estad\'{\i}stica\\Facultad de Ciencias Econ\'omicas y Empresariales, UCM\\Campus de Somosaguas, 28223-Pozuelo de Alarc\'on, Spain\\\emph{E-mail:} \texttt{roferreiro@ccee.ucm.es}}
\maketitle

\begin{abstract}
If $\omega$ is a closed $G$-invariant $2$-form and $\mu$ a moment map, then we
obtain necessary and sufficient conditions for equivariant pre-quantizability
that can be computed in terms of the moment map $\mu$. We also compute the
obstructions to lift the action of $G$ to a pre-quantization bundle of
$\omega$. Our results are valid for any compact and connected Lie group $G$.\ \ \ \ \ \ \ \ \ \ \ \ \ \ \ \ \ \ \ \ \ \ \ \ \ \ \ \ \ \ \ 

\end{abstract}

\noindent\emph{Acknowledgments:\/} Supported by Ministerio de Ciencia,
Innovaci\'{o}n y Universidades of Spain under grant PGC2018-098321-B-I00.

\section{Introduction}

In Geometric Quantization, a closed $2$-form $\omega\in\Omega_{G}^{2}(M)$ on a
manifold $M$ is said to be pre-quantizable if there exists a principal
$U(1)$-bundle $P\rightarrow M$ with connection $\Xi$ such that $\mathrm{curv}%
(\Xi)=\omega$. A classical result of Weil and Kostant (e.g. see \cite{Kostant}%
) states that $\omega$ is pre-quantizable if and only if $\omega$ is integral,
i.e., its cohomology class lies in the image of the map $H^{2}(M,\mathbb{Z}%
)\rightarrow H^{2}(M,\mathbb{R})$.

If a group of symmetries $G$ acts on $M$ and $\omega$ is $G$-invariant, it is
natural to ask if it is possible to pre-quantize $\omega$ mantaining the
symmetry. In more detail, we say that a $G$-invariant $2$-form $\omega$ is
$G$-equivariant pre-quantizable if there exists a lift of the action to a
principal $U(1)$-bundle $P\rightarrow M$ with connection $\Xi$ such that $\Xi$
is $G$-invariant and $\mathrm{curv}(\Xi)=\omega$. In this case, the moment
$\mathrm{Mom}^{\Xi}$\ of $\Xi$ gives a moment map for $\omega$. In particular,
a necessary condition for $\omega$ to be $G$-equivariant pre-quantizable is
that the action of $G$ should be Hamiltonian. If $\mu$ is a moment map for
$\omega$, we say that $(\omega,\mu)$ is $G$-equivariant pre-quantizable\ if
there exists a $G$-equivariant $U(1)$-bundle with connection $(P,\Xi)$ such
that $\mathrm{curv}(\Xi)=\omega$ and $\mathrm{Mom}^{\Xi}=\mu$. For a compact
group $G$, a necessary and sufficient condition for $G$-equivariant
pre-quantizability can be obtained in terms of $G$-equivariant cohomology of
$M$ (e.g. see \cite{GGK},\cite{Mundet}). The problem is that this condition
cannot be computed directly in terms of $\omega$ and $\mu$. In the case in
which $G$ is a torus and $\omega$ is symplectic, there is another
characterization of $G$-equivariant pre-quantizability that can be expressed
directly in terms of $\omega$ and $\mu$ (e.g. see \cite[Example 6.10]{GGK}).
Precisely, $(\omega,\mu)$ is $G$-equivariant pre-quantizable if and only if
$\omega$ is integral and for a fixed point $x\in M^{G}$ we have $\mu_{X}%
(x)\in\mathbb{Z}$ for any $X\in\ker(\exp_{G})$.

In this paper we generalize the preceding result to the case of non-abelian
groups by using the concept of equivariant holonomy. We show that, given a
pre-quantization bundle $(P,\Xi)$ for $\omega$, the obstruction to lift the
action to $P$ is given by a group homomorphism $\Lambda^{\Xi,\mu}\colon
H_{1}(G,\mathbb{Z)\rightarrow\mathbb{R}}/\mathbb{\mathbb{Z}}$. Under mild
assumptions, this obstruction is equivalent to a map $\Delta^{\Xi,\mu}%
\colon\ker(\exp_{G})\mathbb{\rightarrow\mathbb{R}}/\mathbb{\mathbb{Z}}$ that
can be computed in terms of the moment map $\mu$. Moreover, if $G$ is compact
and we do not fix $\mu$, then the obstruction is given by the restriction of
$\Lambda^{\Xi,\mu}$ to the torsion subgroup of $H_{1}(G,\mathbb{Z)}$, and we
have a similar results for the restriction of $\Delta^{\Xi,\mu}$ to a subset
$\mathfrak{T}_{G}\subset\ker(\exp_{G})$. In the important case of a compact
symplectic manifold we obtain that $(\omega,\mu)$ is $G$-equivariant
pre-quantizable if and only if $\omega$ is integral and $\max_{M}(\mu_{X}%
)\in\mathbb{Z}$ for any $X\in\ker(\exp_{G})$. Furthermore, $\omega$ is
$G$-equivariant pre-quantizable if an only if $\omega$ is integral and
$\max_{M}(\mu_{X})\in\mathbb{Z}$ for any $X\in\mathfrak{T}_{G}$.

\section{Equivariant holonomy}

In this section we recall the definition and properties of equivariant
holonomy introduced in \cite{AnomaliesG} and \cite{EquiHol}. Let $G$ be a
connected Lie group with Lie algebra $\mathfrak{g}$ and let $M$ be a connected
and oriented manifold. A $G$-equivariant $U(1)$-bundle is a principal
$U(1)$-bundle $P\rightarrow M$ in which $G$ acts (on the left) by principal
bundle automorphisms. If $\phi\in G$ and $y\in P$, we denote by $\phi_{P}(y)$
the action of $\phi$ on $y$. In a similar way, for $X\in\mathfrak{g}$ we
denote by $X_{P}\in\mathfrak{X}(P)$ the corresponding vector field on $P$
defined by $X_{P}(x)=\left.  \frac{d}{dt}\right\vert _{t=0}\exp(-tX)_{P}(x)$.

We denote by $I$ the interval $[0,1]$. If $\gamma\colon I\rightarrow M$ is a
curve, we define the inverse curve $\overleftarrow{\gamma}\colon I\rightarrow
M$ by $\overleftarrow{\gamma}(t)=\gamma(1-t)$. Moreover, if $\gamma_{1}$ and
$\gamma_{2}\mathcal{\ }$are curves with $\gamma_{1}(1)=\gamma_{2}(0)$ we
define $\gamma_{1}\ast\gamma_{2}\colon I\rightarrow\mathbb{R}$ by $\gamma
_{1}\ast\gamma_{2}(t)=\gamma_{1}(2t)$ for $t\in\lbrack0,1/2]$ and $\gamma
_{1}\ast\gamma_{2}(t)=\gamma_{2}(2t-1)$ for $t\in\lbrack1/2,1]$. For any
$\phi\in G$ we define $\mathcal{C}^{\phi}(M)=\{\gamma\colon I\rightarrow M$
$|$ $\gamma$ is piecewise smooth and $\gamma(1)=\phi_{M}(\gamma(0))\}$, and
$\mathcal{C}_{x}^{\phi}(M)=\{\gamma\in\mathcal{C}^{\phi}(M)$ $|$
$\gamma(0)=x\}$. Note that if $e\in G$ is the identity element, then
$\mathcal{C}_{x}^{e}(M)=\mathcal{C}_{x}(M)$ is the space of loops based at
$x$. If $\phi\in G$ and $\gamma$ is a curve on $M$ then we define $\phi
\cdot\gamma$ by $(\phi\cdot\gamma)(t)=\phi_{M}(\gamma(t))$.

Let $\Xi$ be a $G$-invariant connection on a $G$-equivariant $U(1)$-bundle
$P\rightarrow M$. If $\phi\in G$ and $\gamma\in\mathcal{C}^{\phi}(M)$, the
$\phi$-equivariant holonomy $\mathrm{hol}_{\phi}^{\Xi}(\gamma)\in U(1)$ of
$\gamma$ is characterized by the property $\overline{\gamma}(1)=\phi
_{P}(\overline{\gamma}(0))\cdot\exp(2\pi i\mathrm{hol}_{\phi}^{\Xi}(\gamma))$
for any $\Xi$-horizontal lift $\overline{\gamma}\colon I\rightarrow P$ of
$\gamma$. Note that if $\gamma\in\mathcal{C}_{x}^{e}(M)$ is a loop on $M$,
then $\mathrm{hol}_{e}^{\Xi}(\gamma)$ is the ordinary holonomy of $\gamma$.
The following result is proved in \cite{EquiHol}

\begin{proposition}
\label{PropHol}If $P\rightarrow M$ is a $G$-equivariant principal
$U(1)$-bundle, and $\Xi$ is a $G$-invariant connection on $P$, then for any
$\phi$, $\phi^{\prime}\in G$\ we have

a) If $\gamma\in\mathcal{C}^{\phi}(M)$ and $\gamma^{\prime}\in\mathcal{C}%
_{\gamma(1)}^{\phi^{\prime}}(M)$, then $\gamma\ast\gamma^{\prime}%
\in\mathcal{C}^{\phi^{\prime}\cdot\phi}(M)$ and we have $\mathrm{hol}%
_{\phi^{\prime}\cdot\phi}^{\Xi}(\gamma\ast\gamma^{\prime})=\mathrm{hol}_{\phi
}^{\Xi}(\gamma)+\mathrm{hol}_{\phi^{\prime}}^{\Xi}(\gamma^{\prime})$ and
$\mathrm{hol}_{\phi^{-1}}^{\Xi}(\overleftarrow{\gamma})=-\mathrm{hol}_{\phi
}^{\Xi}(\gamma)$.

b) If $\gamma\in\mathcal{C}^{\phi}(M)$ and $\zeta$ is a curve on $M$ such that
$\zeta(0)=\gamma(0)$\ then we have $\mathrm{hol}_{\phi}^{\Xi}(\overleftarrow
{\zeta}\ast\gamma\ast(\phi\cdot\zeta))=\mathrm{hol}_{\phi}^{\Xi}(\gamma)$.
\end{proposition}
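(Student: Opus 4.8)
The plan is to work directly from the defining property of the equivariant holonomy, lifting each curve horizontally and tracking the start and end points of the lifts inside $P$. Throughout I will use two structural facts about the $G$-invariant connection $\Xi$. First, the right $U(1)$-action on $P$ preserves the horizontal distribution (true for any connection), and each automorphism $\phi_{P}$ is $U(1)$-equivariant, i.e. $\phi_{P}(y\cdot g)=\phi_{P}(y)\cdot g$ for $g\in U(1)$, since $\phi_{P}$ is a principal bundle automorphism. Second, because $\Xi$ is $G$-invariant we have $\phi_{P}^{\ast}\Xi=\Xi$, so $\phi_{P}$ carries $\Xi$-horizontal curves to $\Xi$-horizontal curves. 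This last property is the genuine input of the proposition; the rest is bookkeeping.

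For part a), I would first verify the membership claims straight from the definitions: if $\gamma(1)=\phi_{M}(\gamma(0))$ and $\gamma'(1)=\phi'_{M}(\gamma'(0))$ with $\gamma'(0)=\gamma(1)$, then $(\gamma\ast\gamma')(1)=\gamma'(1)=\phi'_{M}(\phi_{M}(\gamma(0)))=(\phi'\cdot\phi)_{M}((\gamma\ast\gamma')(0))$, so $\gamma\ast\gamma'\in\mathcal{C}^{\phi'\cdot\phi}(M)$, and similarly $\overleftarrow{\gamma}\in\mathcal{C}^{\phi^{-1}}(M)$. For the additivity formula I would take a horizontal lift $\overline{\gamma}$ of $\gamma$ and then the horizontal lift $\overline{\gamma'}$ of $\gamma'$ with $\overline{\gamma'}(0)=\overline{\gamma}(1)$; their concatenation is a horizontal lift of $\gamma\ast\gamma'$. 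Writing the defining equation for $\overline{\gamma'}(1)$, substituting $\overline{\gamma}(1)=\phi_{P}(\overline{\gamma}(0))\cdot\exp(2\pi i\,\mathrm{hol}_{\phi}^{\Xi}(\gamma))$, and using both the $U(1)$-equivariance of $\phi'_{P}$ and the identity $\phi'_{P}\circ\phi_{P}=(\phi'\cdot\phi)_{P}$, the two exponential factors multiply and the additivity formula drops out. The inverse-curve identity is handled the same way: $\overleftarrow{\overline{\gamma}}$ is a horizontal lift of $\overleftarrow{\gamma}$, and applying $\phi^{-1}_{P}$ to the defining equation for $\overline{\gamma}(1)$, together with $\phi^{-1}_{P}\circ\phi_{P}=\mathrm{id}$, produces the sign change.

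For part b), I would lift the composite $\overleftarrow{\zeta}\ast\gamma\ast(\phi\cdot\zeta)$ piece by piece. Fix a horizontal lift $\overline{\zeta}$ of $\zeta$ and let $\overline{\gamma}$ be the horizontal lift of $\gamma$ with $\overline{\gamma}(0)=\overline{\zeta}(0)$; then the reversed lift $\overleftarrow{\overline{\zeta}}$ covers $\overleftarrow{\zeta}$ and arrives exactly at $\overline{\gamma}(0)$, so the first two pieces glue. The decisive step is the third piece: since $\Xi$ is $G$-invariant, $t\mapsto\phi_{P}(\overline{\zeta}(t))$ is a horizontal lift of $\phi\cdot\zeta$, and translating it by $\exp(2\pi i\,\mathrm{hol}_{\phi}^{\Xi}(\gamma))$ gives the horizontal lift starting at $\overline{\gamma}(1)=\phi_{P}(\overline{\zeta}(0))\cdot\exp(2\pi i\,\mathrm{hol}_{\phi}^{\Xi}(\gamma))$. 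Hence the full horizontal lift runs from $\overline{\zeta}(1)$ to $\phi_{P}(\overline{\zeta}(1))\cdot\exp(2\pi i\,\mathrm{hol}_{\phi}^{\Xi}(\gamma))$, and comparing with the defining equation for the holonomy of the composite yields $\mathrm{hol}_{\phi}^{\Xi}(\overleftarrow{\zeta}\ast\gamma\ast(\phi\cdot\zeta))=\mathrm{hol}_{\phi}^{\Xi}(\gamma)$.

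The main obstacle is really the single idea in part b): $G$-invariance of $\Xi$ forces $\phi_{P}$ to send the horizontal lift of $\zeta$ to a horizontal lift of $\phi\cdot\zeta$, and this is exactly what makes the contributions of the conjugating curve $\zeta$ and of its image $\phi\cdot\zeta$ cancel, leaving the holonomy unchanged. Once this is isolated, the remaining work in both parts is routine manipulation with the $U(1)$-equivariance of the bundle automorphisms and the invariance of horizontality under time reversal and under the $U(1)$-action.
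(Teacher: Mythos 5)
Your proof is correct: the membership claims follow from the definitions, the additivity and inversion formulas in a) follow from the $U(1)$-equivariance of $\phi_P$ and $\phi'_P\circ\phi_P=(\phi'\cdot\phi)_P$ applied to concatenated/reversed horizontal lifts, and the key point in b) --- that $G$-invariance of $\Xi$ makes $t\mapsto\phi_P(\overline{\zeta}(t))$ (suitably right-translated) the horizontal lift of $\phi\cdot\zeta$ continuing from $\overline{\gamma}(1)$, so the contributions of $\overleftarrow{\zeta}$ and $\phi\cdot\zeta$ cancel --- is exactly the right mechanism. The paper itself does not prove this proposition but cites it from the reference \cite{EquiHol}, and your argument is the standard direct verification from the defining property of the equivariant holonomy, so there is nothing to compare beyond confirming correctness.
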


If $\Xi$ is a $G$-invariant connection on a principal $U(1)$ bundle
$P\rightarrow M$ then $\frac{i}{2\pi}d\Xi$ projects onto a closed
$G$-equivariant $2$-form $\mathrm{curv}(\Xi)\in\Omega^{2}(M)$ called the
curvature of $\Xi$. We have the following generalization of the classical
Gauss-Bonnet Theorem for surfaces

\begin{proposition}
\label{HolonomyInt}If $\Sigma\subset M$ is a $2$-dimensional submanifold with
boundary $\partial\Sigma=\bigcup\limits_{i=1}^{k}\gamma_{i}$, with $\gamma
_{i}\in\mathcal{C}(M)$ then\ we have $%
{\textstyle\sum_{i=1}^{k}}
\mathrm{hol}^{\Xi}(\gamma_{i})=\int_{\Sigma}\mathrm{curv}(\Xi
)\operatorname{mod}\mathbb{Z}$.
\end{proposition}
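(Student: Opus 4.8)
The plan is to reduce the global statement to a single application of Stokes' theorem by trivializing the bundle over $\Sigma$. Since $\Sigma$ is a compact $2$-manifold whose boundary $\partial\Sigma=\bigcup_{i}\gamma_{i}$ is non-empty, each of its components with boundary deformation retracts onto a $1$-complex, so $H^{2}(\Sigma,\mathbb{Z})=0$ and the restricted $U(1)$-bundle $P|_{\Sigma}$ is trivial. (A closed component, if present, contributes $\int\mathrm{curv}(\Xi)\in\mathbb{Z}$, which vanishes $\operatorname{mod}\mathbb{Z}$ and may be ignored.) I would fix a global section $s\colon\Sigma\rightarrow P|_{\Sigma}$ and set $A=s^{\ast}\Xi$, a genuine $1$-form on $\Sigma$, so that the whole computation takes place downstairs on $\Sigma$.

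First I would establish the local holonomy formula $\mathrm{hol}^{\Xi}(\gamma_{i})=\frac{i}{2\pi}\oint_{\gamma_{i}}A\operatorname{mod}\mathbb{Z}$. Writing a horizontal lift of $\gamma_{i}$ as $\overline{\gamma}_{i}(t)=s(\gamma_{i}(t))\cdot g(t)$ with $g(t)\in U(1)$ and imposing $\Xi(\dot{\overline{\gamma}}_{i})=0$, the abelian structure group makes the $\mathrm{Ad}$-term disappear and gives $g^{-1}\dot{g}=-A(\dot{\gamma}_{i})$, hence $\overline{\gamma}_{i}(1)=\overline{\gamma}_{i}(0)\cdot\exp(-\oint_{\gamma_{i}}A)$; comparing with the defining relation $\overline{\gamma}_{i}(1)=\overline{\gamma}_{i}(0)\cdot\exp(2\pi i\,\mathrm{hol}^{\Xi}(\gamma_{i}))$ yields the claim. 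Next I would use that $\mathrm{curv}(\Xi)$ is the descent of $\frac{i}{2\pi}d\Xi$, so that pulling back by $s$ gives $\frac{i}{2\pi}dA=\mathrm{curv}(\Xi)|_{\Sigma}$. Summing the local formula over $i$ and applying Stokes' theorem on $\Sigma$,
\[
\sum_{i}\mathrm{hol}^{\Xi}(\gamma_{i})=\frac{i}{2\pi}\oint_{\partial\Sigma}A=\frac{i}{2\pi}\int_{\Sigma}dA=\int_{\Sigma}\mathrm{curv}(\Xi)\operatorname{mod}\mathbb{Z},
\]
which is the assertion (the induced boundary orientation on the $\gamma_{i}$ is exactly what Stokes requires).

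The only genuinely delicate points are the two reductions in the first paragraph: the triviality of $P|_{\Sigma}$ and the fact that the $\operatorname{mod}\mathbb{Z}$ ambiguity behaves additively, so that summing the individual holonomies (each defined only in $\mathbb{R}/\mathbb{Z}$) against a single honest integral is legitimate. If one prefers to avoid invoking the classification of $U(1)$-bundles, the same result follows by subdividing $\Sigma$ into small faces each contained in a trivializing chart, proving the formula for a disk-bounding loop as above, and then assembling the faces: Proposition~\ref{PropHol}(a) gives additivity of holonomy and $\mathrm{hol}^{\Xi}(\overleftarrow{\gamma})=-\mathrm{hol}^{\Xi}(\gamma)$, while Proposition~\ref{PropHol}(b) lets one conjugate every face-boundary to a common basepoint along a spanning tree of the $1$-skeleton; the interior edges then cancel in pairs and only the boundary loops $\gamma_{i}$ survive, while the right-hand sides add up to $\int_{\Sigma}\mathrm{curv}(\Xi)$. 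The bookkeeping of this edge cancellation is the main obstacle along that second route, whereas the trivialization route collapses everything into one Stokes computation.
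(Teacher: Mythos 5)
Your argument is correct: trivializing $P|_{\Sigma}$ (using $H^{2}(\Sigma,\mathbb{Z})=0$ for components with boundary, and Chern--Weil integrality to discard closed components mod $\mathbb{Z}$), deriving $\mathrm{hol}^{\Xi}(\gamma_{i})=\frac{i}{2\pi}\oint_{\gamma_{i}}s^{\ast}\Xi\operatorname{mod}\mathbb{Z}$ from the horizontal-lift ODE, and applying Stokes is the standard proof of this Weil--Kostant/Gauss--Bonnet-type identity, and your signs and orientation caveats check out against the paper's conventions. The paper itself states this proposition without proof (presenting it as a known generalization of Gauss--Bonnet, with the holonomy machinery deferred to \cite{EquiHol}), so there is no in-paper argument to compare against; your write-up supplies exactly the routine verification the author omits.
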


If $\omega\in\Omega^{2}(M)$ is a closed $G$-invariant $2$-form, we say that
the action is Hamiltonian if there exist a moment map for $\omega$, i.e., if
there exists a $G$-equivariant map $\mu\colon\mathfrak{g}\rightarrow\Omega
^{0}(M)$ such that $i_{X}\omega=d(\mu(X))$ for any $X\in\mathfrak{g}$. In the
case of a $G$-invariant connection $\Xi$, its curvature $\mathrm{curv}(\Xi)$
has a moment map $\mathrm{Mom}^{\Xi}\colon\mathfrak{g}\rightarrow\Omega
^{0}(M)$ defined by $\mathrm{Mom}_{X}^{\Xi}(x)=-\frac{i}{2\pi}\Xi(X_{P}(y))$
for any $y\in P$ such that $\pi(y)=x$. Furthermore, we have the following
result (see \cite{EquiHol})

\begin{proposition}
\label{InfHolonomy}For any $X\in\mathfrak{g}$ and $x\in M$ we define
$\tau_{x,X}(s)=\exp(sX)_{M}(x)$. Then $\tau_{x,X}\in\mathcal{C}_{x}^{\exp
(X)}(M)$\ and we have $\mathrm{hol}_{\exp(X)}^{\Xi}(\tau_{x,X})=\mathrm{Mom}%
_{X}^{\Xi}(x)\operatorname{mod}\mathbb{Z}$.
\end{proposition}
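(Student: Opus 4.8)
The plan is to produce the $\Xi$-horizontal lift of $\tau_{x,X}$ explicitly, by starting from the natural lift coming from the $G$-action and correcting it by a $U(1)$-phase, and then to extract the holonomy by comparing endpoints with the defining property of $\mathrm{hol}_{\exp(X)}^{\Xi}$.

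The first assertion is immediate: $\tau_{x,X}(0)=\exp(0)_{M}(x)=x$ and $\tau_{x,X}(1)=\exp(X)_{M}(x)=\exp(X)_{M}(\tau_{x,X}(0))$, so $\tau_{x,X}\in\mathcal{C}_{x}^{\exp(X)}(M)$. For the main identity I fix $y\in P$ with $\pi(y)=x$ and set $\sigma(s)=\exp(sX)_{P}(y)$. Because the $G$-action on $P$ covers that on $M$, we have $\pi(\sigma(s))=\exp(sX)_{M}(x)=\tau_{x,X}(s)$, so $\sigma$ is a (generally non-horizontal) lift of $\tau_{x,X}$ with $\sigma(0)=y$ and $\sigma(1)=\exp(X)_{P}(y)=\exp(X)_{P}(\sigma(0))$. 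Differentiating $s\mapsto\exp(sX)_{P}(y)$ and comparing with the defining formula $X_{P}(z)=\left.\frac{d}{dt}\right\vert_{t=0}\exp(-tX)_{P}(z)$ yields $\dot\sigma(s)=-X_{P}(\sigma(s))$. Combining this with $\mathrm{Mom}_{X}^{\Xi}(\pi(z))=-\frac{i}{2\pi}\Xi(X_{P}(z))$ gives $\Xi(\dot\sigma(s))=-\Xi(X_{P}(\sigma(s)))=-2\pi i\,\mathrm{Mom}_{X}^{\Xi}(\tau_{x,X}(s))$.

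The crucial point is that $\mathrm{Mom}_{X}^{\Xi}$ is constant along $\tau_{x,X}$. Indeed, the same computation in $M$ gives $\dot\tau_{x,X}(s)=-X_{M}(\tau_{x,X}(s))$, so from the moment map relation $d(\mathrm{Mom}_{X}^{\Xi})=i_{X_{M}}\mathrm{curv}(\Xi)$ we obtain $\frac{d}{ds}\mathrm{Mom}_{X}^{\Xi}(\tau_{x,X}(s))=-(i_{X_{M}}i_{X_{M}}\mathrm{curv}(\Xi))(\tau_{x,X}(s))=0$; equivalently, $\mathrm{Mom}_{X}^{\Xi}$ is invariant under the flow of $\exp(sX)_{M}$ because $\mathrm{Ad}_{\exp(sX)}X=X$. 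Writing $m:=\mathrm{Mom}_{X}^{\Xi}(x)$, we conclude $\Xi(\dot\sigma(s))=-2\pi i\,m$ is constant. I then correct $\sigma$ to a horizontal lift by setting $\bar\tau(s)=\sigma(s)\cdot\exp(2\pi i s\,m)$: using the transformation law of a $U(1)$-connection under the right action together with the Maurer-Cartan term, $\Xi(\dot{\bar\tau}(s))=\Xi(\dot\sigma(s))+2\pi i\,m=0$, so $\bar\tau$ is $\Xi$-horizontal with $\bar\tau(0)=y$ and $\bar\tau(1)=\exp(X)_{P}(y)\cdot\exp(2\pi i\,m)$. Comparing with the characterizing property $\bar\tau(1)=\exp(X)_{P}(\bar\tau(0))\cdot\exp(2\pi i\,\mathrm{hol}_{\exp(X)}^{\Xi}(\tau_{x,X}))$ and cancelling $\exp(X)_{P}(y)$ gives $\mathrm{hol}_{\exp(X)}^{\Xi}(\tau_{x,X})=m=\mathrm{Mom}_{X}^{\Xi}(x)\operatorname{mod}\mathbb{Z}$.

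The main obstacle I anticipate is the bookkeeping of sign and normalization conventions, which here interact delicately: the fundamental field $X_{P}$ is defined through the flow $\exp(-tX)$ while the curve $\tau_{x,X}$ runs along $\exp(+sX)$, so the sign in $\dot\sigma=-X_{P}\circ\sigma$ must be propagated consistently through the factor $-\frac{i}{2\pi}$ in the definition of $\mathrm{Mom}_{X}^{\Xi}$ and through the $2\pi i$ in the holonomy exponential. The only genuinely substantive step is the constancy of $\mathrm{Mom}_{X}^{\Xi}$ along $\tau_{x,X}$, which is what upgrades the a priori integral $\int_{0}^{1}\mathrm{Mom}_{X}^{\Xi}(\tau_{x,X}(s))\,ds$ of the horizontal correction into the pointwise value $\mathrm{Mom}_{X}^{\Xi}(x)$ asserted in the statement.
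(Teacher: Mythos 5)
Your proof is correct. Note that the paper itself gives no proof of this proposition --- it is quoted from the reference \cite{EquiHol} --- so there is no internal argument to compare against; your derivation (lift $\tau_{x,X}$ via the $G$-action to $\sigma(s)=\exp(sX)_{P}(y)$, observe $\Xi(\dot\sigma)= -2\pi i\,\mathrm{Mom}_{X}^{\Xi}$ is constant along the curve because $i_{X_{M}}i_{X_{M}}\mathrm{curv}(\Xi)=0$, correct by the phase $\exp(2\pi i s\,m)$ to get a horizontal lift, and read off the holonomy from the endpoints) is the standard and essentially forced one, and the sign bookkeeping with the convention $X_{P}(z)=\left.\frac{d}{dt}\right\vert_{t=0}\exp(-tX)_{P}(z)$ and $\mathrm{Mom}_{X}^{\Xi}=-\frac{i}{2\pi}\Xi(X_{P})$ checks out.
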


\section{Lifting the action to a pre-quantization bundle\label{Sect Lift}}

Let $\omega$ be a closed $G$-invariant $2$-form and let $(P,\Xi)$ be a
(non-equivariant) pre-quantization of $\omega$. A classical problem in
Symplectic Geometry is if the action of $G$ can be lifted to $P$ leaving $\Xi
$\ invariant (e.g. see \cite{Gotay},\cite{Kostant},\cite{Mundet} and
references therein). If $\mu$ is a moment map for $\omega$, we can also impose
that $\mathrm{Mom}^{\Xi}=\mu$. We study these problems in detail in the next Sections.

\subsection{Lifting the action with the moment map fixed}

Let $P\rightarrow M$ be a principal $U(1)$-bundle, $\Xi$ a connection on $P$
and let $\mu\colon\mathfrak{g}\rightarrow\Omega^{0}(M)$ be a moment map for
$\omega=\mathrm{curv}(\Xi)$. We say that the action of $G$ on $M$ admits a
$(\Xi,\mu)$-lift to $P$ if there exists a lift of the action of $G$ to
$P\rightarrow M$ by automorphisms such that $\Xi$ is $G$-invariant and
$\mathrm{Mom}^{\Xi}=\mu.$

At the infinitesimal level it is possible to lift the action of $\mathfrak{g}$
to $P$ in such a way that $L_{X_{P}}\Xi=0$ and $\iota_{X_{P}}\Xi=-\mu_{X}$ for
any $X\in\mathfrak{g}$ (e.g. see \cite{GGK},\cite{Kostant},\cite{Mundet}).
Precisely, the lift is defined by setting $X_{P}(y)=H_{y}^{\Xi}(X_{N}%
(x))-\mu_{X}(x)\xi_{P}(y)$, where $x=\pi(y)$, $H_{y}^{\Xi}(X_{N})$ is the
$\Xi$-horizontal lift of $X_{N}$\ and $\xi_{P}$ the vector field associated to
the action of $U(1)$ on $P$. If $G$ is connected and simply connected, then
the Lie algebra action can be exponentiated to an action of $G$ on $P$ (e.g.
see \cite{Kostant}) and hence the action of $G$ admits a $(\Xi,\mu)$-lift to
$P$. However, if $G$ is connected but not simply connected there could be
obstructions to lift the action of $G$ to $P$ (e.g. see \cite{EquiHol}%
,\cite{Mundet}).

Let $\rho_{G}\colon\tilde{G}\rightarrow G$ be the universal covering group of
$G$. The action of $G$ induces an action of $\tilde{G}$ on $M$ and, as
$\tilde{G}$ is simply connected, this action admits a $(\Xi,\mu)$-lift to $P$,
i.e., we have a group homomorphisms $\beta\colon\tilde{G}\rightarrow
\mathrm{Aut}P$. The homomorphism $\beta$\ defines a group homomorphism
$G\rightarrow\mathrm{Aut}P$ if and only if $\beta(\ker\rho_{G})=\mathrm{id}%
_{P}$, i.e., if and only if $\phi_{P}=\mathrm{id}_{P}$ for any $\phi\in
\ker\rho_{G}$. The group $\ker\rho_{G}$ can be identified with the fundamental
group $\pi_{1}(G)$ that it is well known to be an abelian group and hence we
have $\ker\rho_{G}\simeq\pi_{1}(G)\simeq H_{1}(G,\mathbb{Z})$. The explicit
isomorphism is given by the map that assigns to $\phi\in\ker\rho_{G}$ the
class of $\rho_{G}(\gamma)$, where $\gamma$ is any curve on $\tilde{G}$
joining $1_{G}$ and $\phi$.

If $\phi\in\ker\rho_{G}$ and $y\in P$ then we have $\phi_{P}(y)=y\cdot
\exp(-2\pi i\cdot\Lambda_{y}^{\Xi,\mu}(\phi))$ for certain $\Lambda_{y}%
^{\Xi,\mu}(\phi)\in\mathbb{R}/\mathbb{Z}$. The action admits a $(\Xi,\mu
)$-lift to $P$ if and only if\ $\Lambda_{y}^{\Xi,\mu}(\phi
)=0\operatorname{mod}\mathbb{Z}$ for any $\phi\in\ker\rho_{G}\simeq
H_{1}(G,\mathbb{Z})$ and $y\in P$. We study this condition in terms of the
$\tilde{G}$-equivariant holonomy of $\Xi$. From the definition of equivariant
holonomy we obtain the following

\begin{proposition}
\label{hol}If $\phi\in\ker\rho_{G}$ then for any $\gamma\in\mathcal{C}%
_{\pi(y)}^{\phi}(M)=\mathcal{C}_{\pi(y)}^{e}(M)$ we have $\Lambda_{y}^{\Xi
,\mu}=\mathrm{hol}_{\phi}^{\Xi}(\gamma)-\mathrm{hol}_{e}^{\Xi}(\gamma)$.
\end{proposition}

In particular, by applying the preceding proposition to the constant curve
$c_{x}$, with $x=\pi(y)$ we obtain $\Lambda_{y}^{\Xi,\mu}=\mathrm{hol}_{\phi
}^{\Xi}(c_{x})$.

\begin{proposition}
$\Lambda_{y}^{\Xi,\mu}(\phi)$ is a constant function on $P$ (i.e., it does not
depend on $y\in P$).
\end{proposition}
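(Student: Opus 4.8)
The plan is to prove constancy in two stages — first along the fibers of $\pi\colon P\to M$, then along the base — and to finish with the connectedness of $M$. First I would record that $\phi_P=\beta(\phi)$ is a principal bundle automorphism, hence $U(1)$-equivariant. Writing $\phi_P(y)=y\cdot\exp(-2\pi i\,\Lambda_y^{\Xi,\mu}(\phi))$ and comparing $\phi_P(y\cdot h)$ with $\phi_P(y)\cdot h$ for $h\in U(1)$ immediately gives $\Lambda_{y\cdot h}^{\Xi,\mu}(\phi)=\Lambda_y^{\Xi,\mu}(\phi)$. Thus $\Lambda_y^{\Xi,\mu}(\phi)$ is constant on each fiber and descends to a function of $x=\pi(y)$, which I will denote $\Lambda_x$.

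For the base direction I would use the identity $\Lambda_y^{\Xi,\mu}(\phi)=\mathrm{hol}_\phi^\Xi(\gamma)-\mathrm{hol}_e^\Xi(\gamma)$ of Proposition \ref{hol}, valid for every loop $\gamma\in\mathcal{C}_x^e(M)$, together with the crucial fact that $\phi\in\ker\rho_G$ acts trivially on $M$, so that $\phi\cdot\zeta=\zeta$ for any curve $\zeta$. Given two points $x_0,x_1$ joined by a curve $\zeta$ with $\zeta(0)=x_0$, form the loop $\eta=\overleftarrow{\zeta}\ast c_{x_0}\ast\zeta$ based at $x_1$. Applying Proposition \ref{PropHol}(b) with $\gamma=c_{x_0}$ and using $\phi\cdot\zeta=\zeta$ yields $\mathrm{hol}_\phi^\Xi(\eta)=\mathrm{hol}_\phi^\Xi(c_{x_0})=\Lambda_{x_0}$. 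The ordinary holonomy $\mathrm{hol}_e^\Xi(\eta)$ vanishes, because $\eta$ runs along $\zeta$ backward and then forward, so its horizontal lift returns to its starting point. Substituting into Proposition \ref{hol} at the basepoint $x_1$ then gives $\Lambda_{x_1}=\mathrm{hol}_\phi^\Xi(\eta)-\mathrm{hol}_e^\Xi(\eta)=\Lambda_{x_0}$.

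Since $M$ is connected, every pair of points is joined by a piecewise smooth curve, so $\Lambda_{x_0}=\Lambda_{x_1}$ for all $x_0,x_1$, proving that $\Lambda_y^{\Xi,\mu}(\phi)$ is constant on $P$. The step needing the most care is the base-direction argument: one must track basepoints correctly in the concatenation $\overleftarrow{\zeta}\ast c_{x_0}\ast\zeta$, invoke the triviality of the $\phi$-action on $M$ to replace $\phi\cdot\zeta$ by $\zeta$ so that Proposition \ref{PropHol}(b) applies, and justify that the back-and-forth loop carries trivial ordinary holonomy. A cleaner alternative would be to note that $\phi_P$ is a vertical gauge transformation preserving the $\tilde{G}$-invariant connection $\Xi$; such a transformation is given by a map $M\to U(1)$ whose logarithmic derivative must vanish, forcing it to be locally constant and hence constant on the connected $M$.
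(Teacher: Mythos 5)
Your proof is correct and follows essentially the same route as the paper: the paper likewise connects $x$ to $x'$ by a curve $\zeta$, uses $\phi\cdot\zeta=\zeta$ together with Proposition \ref{PropHol}(b) applied to the constant loop, and exploits the vanishing ordinary holonomy of the thin loop $\overleftarrow{\zeta}\ast c_x\ast\zeta$ (the paper closes the computation via the concatenation rule of Proposition \ref{PropHol}(a) rather than via Proposition \ref{hol}, but this is a cosmetic difference). Your explicit fiber-constancy step and the closing gauge-transformation remark are both sound additions, though the paper leaves the former implicit in the identity $\Lambda_y^{\Xi,\mu}=\mathrm{hol}_\phi^{\Xi}(c_{\pi(y)})$.
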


\begin{proof}
If $y^{\prime}\in P$ is another point and $x^{\prime}=\pi(x)$, we choose a
curve $\zeta$ with $\zeta(0)=x$ and $\zeta(1)=x^{\prime}$. As $\phi\in\ker
\rho$ we have $(\phi\cdot\zeta)=\zeta$\ and hence
\begin{align*}
0  &  =\mathrm{hol}_{e}^{\Xi}(\overleftarrow{\zeta}\ast c_{x}\ast\zeta
\ast\overleftarrow{c}_{x^{\prime}})=\mathrm{hol}_{\phi}^{\Xi}(\overleftarrow
{\zeta}\ast c_{x}\ast\zeta)+\mathrm{hol}_{\phi^{-1}}^{\Xi}(\overleftarrow
{c}_{x^{\prime}})\\
&  =\mathrm{hol}_{\phi}^{\Xi}(c_{x})-\mathrm{hol}_{\phi}^{\Xi}(c_{x^{\prime}%
}).
\end{align*}

\end{proof}

We denote $\Lambda_{y}^{\Xi,\mu}$\ simply by $\Lambda^{\Xi,\mu}$, and we have
a well defined group homomorphism $\Lambda^{\Xi,\mu}\colon\ker\rho
_{G}\rightarrow\mathbb{R}/\mathbb{Z}$ and the following

\begin{proposition}
The action of $G$ on $M$ admits a $(\Xi,\mu)$-lift if and only if the
homomorphism $\Lambda^{\Xi,\mu}\colon\ker\rho_{G}\simeq H_{1}(G,\mathbb{Z}%
)\rightarrow\mathbb{R}/\mathbb{Z}$ vanishes.
\end{proposition}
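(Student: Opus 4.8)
The plan is to reduce the statement to the fact, already set up above, that the action admits a $(\Xi,\mu)$-lift precisely when the $\tilde{G}$-action $\beta$ descends to $G$, and then to translate this descent condition into the vanishing of $\Lambda^{\Xi,\mu}$. The crucial structural input is the observation recalled earlier that the infinitesimal $(\Xi,\mu)$-lift is \emph{uniquely} determined by the formula $X_{P}(y)=H_{y}^{\Xi}(X_{M}(x))-\mu_{X}(x)\xi_{P}(y)$; this rigidity at the Lie-algebra level is what will let me compare any two global lifts.

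First I would treat the forward implication. Suppose the $G$-action admits a $(\Xi,\mu)$-lift $\alpha\colon G\rightarrow\mathrm{Aut}P$. Composing with $\rho_{G}$ produces a $(\Xi,\mu)$-lift $\alpha\circ\rho_{G}$ of the $\tilde{G}$-action whose differential is exactly the prescribed infinitesimal lift. Since $\tilde{G}$ is connected, and a homomorphism out of a connected group into the automorphism group is determined by the flows of its infinitesimal generators, I would conclude $\alpha\circ\rho_{G}=\beta$. Evaluating on $\phi\in\ker\rho_{G}$ then gives $\phi_{P}=\beta(\phi)=\alpha(\rho_{G}(\phi))=\alpha(e)=\mathrm{id}_{P}$, whence $\Lambda^{\Xi,\mu}(\phi)=0$ by the defining relation $\phi_{P}(y)=y\cdot\exp(-2\pi i\,\Lambda^{\Xi,\mu}(\phi))$ together with the freeness of the $U(1)$-action on $P$.

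For the converse, assume $\Lambda^{\Xi,\mu}$ vanishes. Then for every $\phi\in\ker\rho_{G}$ the phase $\exp(-2\pi i\,\Lambda^{\Xi,\mu}(\phi))$ equals $1$, so $\phi_{P}(y)=y$ for all $y$, i.e. $\beta(\phi)=\mathrm{id}_{P}$. Thus $\beta(\ker\rho_{G})=\{\mathrm{id}_{P}\}$, so $\beta$ factors through $\rho_{G}$ and yields a homomorphism $\alpha\colon G\simeq\tilde{G}/\ker\rho_{G}\rightarrow\mathrm{Aut}P$. It then remains to verify that $\alpha$ is a genuine $(\Xi,\mu)$-lift of the $G$-action: because $\alpha$ acts through representatives in $\tilde{G}$, and $\beta$ already preserves $\Xi$, satisfies $\mathrm{Mom}^{\Xi}=\mu$, and covers the $\tilde{G}$-action on $M$ (which itself factors through $G$ by construction), all three properties descend to $\alpha$.

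The step I expect to be the main obstacle is the uniqueness identity $\alpha\circ\rho_{G}=\beta$ in the forward direction: one must justify that a connected Lie group acting by bundle automorphisms is pinned down by its infinitesimal action even though $\mathrm{Aut}P$ is infinite-dimensional. I would handle this by integrating the generators $X_{P}$ to their flows and using the connectedness of $\tilde{G}$ to write an arbitrary element as a finite product of exponentials, so that the two actions agree on a generating set and therefore everywhere. The remaining ingredients—that $\Lambda^{\Xi,\mu}$ is a group homomorphism and is independent of $y$—are already in hand from the two preceding propositions, so no additional work is needed there.
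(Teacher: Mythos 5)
Your proof is correct and follows essentially the same route as the paper: the paper treats this proposition as immediate from the preceding construction of $\beta\colon\tilde{G}\rightarrow\mathrm{Aut}P$ and the observation that a $(\Xi,\mu)$-lift of the $G$-action exists iff $\beta$ kills $\ker\rho_{G}$, which is exactly your argument. Your only addition is to make explicit the forward-direction point that any $G$-lift pulls back to $\beta$ via the uniqueness of the infinitesimal lift and connectedness of $\tilde{G}$, a detail the paper leaves implicit.
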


We can obtain an equivalent conditions in terms of Lie algebra $\mathfrak{g}$
of $G$. Let $\exp_{G}\colon\mathfrak{g}\rightarrow G$ and $\exp_{\tilde{G}%
}\colon\mathfrak{g}\rightarrow\tilde{G}$ be the exponential maps and define
$\ker\exp_{G}=\{X\in\mathfrak{g}:\exp(X)=1_{G}\}$. We have $\exp_{\tilde{G}%
}(\ker\exp_{G})\subset\ker\rho_{G}$, and we define $\Delta^{\Xi,\mu}\colon
\ker\exp_{G}\rightarrow\mathbb{R}/\mathbb{Z}$ by $\Delta^{\Xi,\mu}%
(X)=\Lambda^{\Xi,\mu}(\exp_{\tilde{G}}X)$. By applying Propositions
\ref{InfHolonomy}\ and \ref{hol} to the curve $\tau_{x,X}$\ we obtain the following

\begin{proposition}
\label{calculo}If $X\in\ker\exp_{G}$ then

a) for any $x\in M$ we have $\Delta^{\Xi,\mu}(X)=\mu_{X}(x)-\mathrm{hol}^{\Xi
}(\tau_{x,X})\operatorname{mod}\mathbb{Z}.$

e) If $x\in M$ and $X_{M}(x)=0$ then $\Delta^{\Xi,\mu}(X)=\mu_{X}%
(x)\operatorname{mod}\mathbb{Z}$.
\end{proposition}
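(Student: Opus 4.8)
The plan is to prove Proposition~\ref{calculo} by directly combining the two results about equivariant holonomy that have already been established, namely Proposition~\ref{InfHolonomy} and Proposition~\ref{hol}, applied to the specific curve $\tau_{x,X}$.

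\textbf{Part a).} Fix $X\in\ker\exp_{G}$, so that $\exp_{G}(X)=1_{G}$ and $\phi:=\exp_{\tilde{G}}(X)\in\ker\rho_{G}$. For $x\in M$, the curve $\tau_{x,X}(s)=\exp(sX)_{M}(x)$ satisfies $\tau_{x,X}(1)=\exp(X)_{M}(x)=x$ because $\exp_{G}(X)=1_{G}$ acts trivially; hence $\tau_{x,X}$ is in fact a loop based at $x$, i.e. $\tau_{x,X}\in\mathcal{C}_{x}^{\exp(X)}(M)=\mathcal{C}_{x}^{e}(M)$. First I would invoke Proposition~\ref{hol} with the point $y\in P$ satisfying $\pi(y)=x$ and with the loop $\gamma=\tau_{x,X}$, which gives
\[
\Delta^{\Xi,\mu}(X)=\Lambda^{\Xi,\mu}(\phi)=\mathrm{hol}_{\phi}^{\Xi}(\tau_{x,X})-\mathrm{hol}_{e}^{\Xi}(\tau_{x,X})\quad\operatorname{mod}\mathbb{Z}.
\]
Next I would rewrite the two holonomy terms. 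The second term $\mathrm{hol}_{e}^{\Xi}(\tau_{x,X})$ is, by definition, the ordinary holonomy $\mathrm{hol}^{\Xi}(\tau_{x,X})$ of the loop. For the first term, Proposition~\ref{InfHolonomy} (recalling that $\phi=\exp_{\tilde{G}}(X)$ projects to $\exp_{G}(X)$, so the relevant equivariant holonomy is $\mathrm{hol}_{\exp(X)}^{\Xi}$) yields $\mathrm{hol}_{\exp(X)}^{\Xi}(\tau_{x,X})=\mathrm{Mom}_{X}^{\Xi}(x)\operatorname{mod}\mathbb{Z}$, and since $\mu=\mathrm{Mom}^{\Xi}$ by hypothesis this equals $\mu_{X}(x)$. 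Substituting both expressions gives $\Delta^{\Xi,\mu}(X)=\mu_{X}(x)-\mathrm{hol}^{\Xi}(\tau_{x,X})\operatorname{mod}\mathbb{Z}$, which is precisely part a).

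\textbf{Part e).} This is the special case where $X_{M}(x)=0$, i.e. $x$ is a fixed point of the infinitesimal flow generated by $X$. I would observe that $X_{M}(x)=0$ forces $\tau_{x,X}(s)=\exp(sX)_{M}(x)=x$ for all $s$, so $\tau_{x,X}$ is the constant loop $c_{x}$. The ordinary holonomy of a constant loop vanishes, so $\mathrm{hol}^{\Xi}(\tau_{x,X})=0$, and part a) collapses to $\Delta^{\Xi,\mu}(X)=\mu_{X}(x)\operatorname{mod}\mathbb{Z}$.

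The argument is essentially a bookkeeping assembly of prior results, so I do not anticipate a genuine obstacle; the one point requiring care is the identification that $\tau_{x,X}$ is a \emph{closed} loop (not merely a path in $\mathcal{C}^{\exp(X)}$), which is exactly what lets both Proposition~\ref{hol} and the ordinary-holonomy interpretation apply simultaneously. I would make sure the "$\operatorname{mod}\mathbb{Z}$" reductions are handled consistently, since each holonomy lives in $U(1)\simeq\mathbb{R}/\mathbb{Z}$ and the equalities are only meaningful modulo $\mathbb{Z}$; the final statements are therefore stated mod $\mathbb{Z}$, which matches the codomain of $\Delta^{\Xi,\mu}$.
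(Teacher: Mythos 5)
Your proposal is correct and follows exactly the route the paper intends: the paper's entire justification is the sentence ``By applying Propositions \ref{InfHolonomy} and \ref{hol} to the curve $\tau_{x,X}$ we obtain the following,'' and your write-up simply fills in that assembly, including the correct observation that $\tau_{x,X}$ is a genuine loop and that the $\tilde{G}$-lift is constructed so that $\mathrm{Mom}^{\Xi}=\mu$. Part e) as the constant-loop degeneration of part a) is likewise the intended argument.
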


In particular, if $x\in M^{G}$ is a fixed point for the action of $G$, then
$\Delta^{\Xi,\mu}(X)=\mu_{X}(x)\operatorname{mod}\mathbb{Z}$ for any $X\in
\ker\exp_{G}$.

\begin{corollary}
If $\omega$ is integral and $\mu$ is a moment map for $\omega$, then for any
$X\in\ker\exp_{G}$ and any $x,x^{\prime}\in M$ such that $X_{M}(x)=X_{M}%
(x^{\prime})=0$ we have $\mu_{X}(x)-\mu_{X}(x^{\prime})\in\mathbb{Z}$. In
particular, for any two fixed points $x,x^{\prime}\in M^{G}$ we have $\mu
_{X}(x)-\mu_{X}(x^{\prime})\in\mathbb{Z}$.
\end{corollary}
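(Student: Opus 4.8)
The plan is to exploit the base-point independence of $\Delta^{\Xi,\mu}$ that has just been established. First I would invoke the integrality of $\omega$: by the Weil--Kostant theorem there exists a (non-equivariant) principal $U(1)$-bundle $P\rightarrow M$ carrying a connection $\Xi$ with $\mathrm{curv}(\Xi)=\omega$. Since $\mu$ is a moment map for $\omega=\mathrm{curv}(\Xi)$, all the constructions of this subsection apply to the pair $(\Xi,\mu)$, and in particular the homomorphism $\Delta^{\Xi,\mu}\colon\ker\exp_{G}\rightarrow\mathbb{R}/\mathbb{Z}$ is well defined. The essential point is that $\Delta^{\Xi,\mu}(X)$ depends only on $X$ and on the fixed data $(\Xi,\mu)$, and not on any chosen point of $M$.

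The main step is then a direct application of Proposition~\ref{calculo}(e). Fix $X\in\ker\exp_{G}$. For any point $z\in M$ with $X_{M}(z)=0$, that proposition gives $\Delta^{\Xi,\mu}(X)=\mu_{X}(z)\operatorname{mod}\mathbb{Z}$. Applying this both to $x$ and to $x^{\prime}$ --- each of which satisfies $X_{M}(x)=X_{M}(x^{\prime})=0$ by hypothesis --- yields $\mu_{X}(x)\equiv\Delta^{\Xi,\mu}(X)\equiv\mu_{X}(x^{\prime})\operatorname{mod}\mathbb{Z}$, whence $\mu_{X}(x)-\mu_{X}(x^{\prime})\in\mathbb{Z}$.

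For the ``in particular'' clause I would simply observe that a fixed point $x\in M^{G}$ satisfies $\phi_{M}(x)=x$ for all $\phi\in G$, so that $X_{M}(x)=\left.\frac{d}{dt}\right\vert_{t=0}\exp(-tX)_{M}(x)=0$ for every $X\in\mathfrak{g}$, and in particular for every $X\in\ker\exp_{G}$. Thus any two fixed points meet the hypothesis of the first assertion and the stated conclusion follows.

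I do not expect a genuine obstacle here; the content of the corollary is entirely packaged into two facts already proved, namely that $\Delta^{\Xi,\mu}$ is a single well-defined element of $\mathbb{R}/\mathbb{Z}$ for each $X$, and that this element equals $\mu_{X}$ evaluated at any zero of $X_{M}$. The only point requiring a moment's care is that $\Delta^{\Xi,\mu}$ is taken with respect to the given moment map $\mu$ rather than $\mathrm{Mom}^{\Xi}$; this is legitimate because the construction of $\Delta^{\Xi,\mu}$, through the infinitesimal lift $X_{P}(y)=H_{y}^{\Xi}(X_{M}(x))-\mu_{X}(x)\xi_{P}(y)$, requires only that $\mu$ be a moment map for $\mathrm{curv}(\Xi)$, which is exactly what the integrality hypothesis supplies.
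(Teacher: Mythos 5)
Your proof is correct and is exactly the argument the paper intends: the corollary is stated without proof as an immediate consequence of Proposition~\ref{calculo}(e), using integrality (via Weil--Kostant) to supply a pre-quantization $(P,\Xi)$ so that $\Delta^{\Xi,\mu}(X)$ is a single well-defined class equal to $\mu_{X}\operatorname{mod}\mathbb{Z}$ at every zero of $X_{M}$. Nothing to add.
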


\begin{proposition}
If $\omega=\mathrm{curv}(\Xi)$ is symplectic (i.e. non-degenerated) then for
any $X\in\ker\exp_{G}$ and any critical point $x$ of $\mu_{X}$ we have
$\Delta^{\Xi,\mu}(X)=\mu_{X}(x)\operatorname{mod}\mathbb{Z}.$
\end{proposition}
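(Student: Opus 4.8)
The plan is to reduce the statement to part (e) of Proposition \ref{calculo}, which already handles the case of a zero of the fundamental vector field $X_M$. The only work required is to show that, when $\omega$ is non-degenerate, the critical points of the function $\mu_X$ coincide with the zeros of $X_M$; once this is established, the conclusion $\Delta^{\Xi,\mu}(X)=\mu_X(x)\operatorname{mod}\mathbb{Z}$ is immediate.

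Concretely, I would start from the defining property of the moment map, namely $i_{X_M}\omega=d(\mu_X)$, which holds for every $X\in\mathfrak{g}$ and in particular for our chosen $X\in\ker\exp_G$. If $x$ is a critical point of $\mu_X$, then $(d\mu_X)_x=0$ by definition, and hence $(i_{X_M}\omega)_x=0$, i.e. $\omega_x\bigl(X_M(x),v\bigr)=0$ for every tangent vector $v\in T_xM$. At this point I invoke the hypothesis that $\omega$ is symplectic: since $\omega_x$ is non-degenerate, the only vector annihilating the entire tangent space under $\omega_x$ is the zero vector, so $X_M(x)=0$.

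Having shown $X_M(x)=0$, I would then apply Proposition \ref{calculo}(e) directly: for $X\in\ker\exp_G$ and a point $x$ with $X_M(x)=0$, it gives exactly $\Delta^{\Xi,\mu}(X)=\mu_X(x)\operatorname{mod}\mathbb{Z}$, which is the desired conclusion. No further computation with the equivariant holonomy is needed, since Proposition \ref{calculo} has already absorbed that content.

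I do not anticipate a genuine obstacle here, as the proof is a short chain of implications. The one point deserving care is the logical direction of the non-degeneracy argument: it is non-degeneracy of $\omega$ that upgrades the \emph{a priori} weaker condition $d(\mu_X)_x=0$ to the stronger geometric statement $X_M(x)=0$, and this is precisely where the symplectic hypothesis is indispensable. (Without non-degeneracy one could have critical points of $\mu_X$ at which $X_M$ does not vanish, and the identity would fail.) Thus the single substantive step is the passage from a critical point of the moment component to a fixed point of the infinitesimal action.
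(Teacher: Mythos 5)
Your proposal is correct and follows exactly the paper's own argument: from $0=(d\mu_X)_x=(i_{X_M}\omega)_x$ and non-degeneracy of $\omega$ conclude $X_M(x)=0$, then apply Proposition \ref{calculo}. No differences worth noting.
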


\begin{proof}
We have $0=d\mu_{X}(x)=i_{X_{M}}\omega(x)$, and as $\omega$ is non-degenerate,
we conclude that $X_{M}(x)=0$. By Proposition \ref{calculo}\ we have
$\Delta^{\Xi,\mu}(X)=\mu_{X}(x)\operatorname{mod}\mathbb{Z}$.
\end{proof}

\begin{corollary}
If $M$ is compact and $\omega$ is symplectic then for any $X\in\ker\exp_{G}$
we have $\Delta^{\Xi,\mu}(X)=\max_{M}(\mu_{X})\operatorname{mod}%
\mathbb{Z}=\min_{M}(\mu_{X})\operatorname{mod}\mathbb{Z}$.
\end{corollary}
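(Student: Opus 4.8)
The plan is to deduce this directly from the preceding Proposition, which already establishes that for symplectic $\omega$ and any $X\in\ker\exp_{G}$, one has $\Delta^{\Xi,\mu}(X)=\mu_{X}(x)\operatorname{mod}\mathbb{Z}$ at \emph{every} critical point $x$ of $\mu_{X}$. The only additional input needed is compactness of $M$, which guarantees that the smooth (hence continuous) function $\mu_{X}\colon M\rightarrow\mathbb{R}$ attains both its maximum and its minimum.

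First I would fix $X\in\ker\exp_{G}$ and choose points $x_{\max},x_{\min}\in M$ realizing $\mu_{X}(x_{\max})=\max_{M}(\mu_{X})$ and $\mu_{X}(x_{\min})=\min_{M}(\mu_{X})$; these exist precisely because $M$ is compact. Next I would observe that $M$ is a manifold without boundary, so an interior extremum forces the differential to vanish: $d\mu_{X}(x_{\max})=0$ and $d\mu_{X}(x_{\min})=0$. Hence both $x_{\max}$ and $x_{\min}$ are critical points of $\mu_{X}$, which is the hypothesis required to invoke the preceding Proposition.

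Finally I would apply that Proposition separately at $x_{\max}$ and at $x_{\min}$, obtaining $\Delta^{\Xi,\mu}(X)=\mu_{X}(x_{\max})\operatorname{mod}\mathbb{Z}=\max_{M}(\mu_{X})\operatorname{mod}\mathbb{Z}$ and likewise $\Delta^{\Xi,\mu}(X)=\mu_{X}(x_{\min})\operatorname{mod}\mathbb{Z}=\min_{M}(\mu_{X})\operatorname{mod}\mathbb{Z}$. Since both quantities equal the single value $\Delta^{\Xi,\mu}(X)$, the chain of equalities in the statement follows, and as a byproduct one reads off the (a priori nonobvious) congruence $\max_{M}(\mu_{X})\equiv\min_{M}(\mu_{X})\operatorname{mod}\mathbb{Z}$. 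I do not anticipate a genuine obstacle here: the content is entirely carried by the preceding Proposition, and the step most worth stating carefully is simply that extrema of a smooth function on a boundaryless compact manifold are critical points, so that the Proposition applies.
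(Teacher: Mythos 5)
Your proof is correct and is precisely the argument the paper intends: the corollary is stated without proof as an immediate consequence of the preceding proposition, obtained by noting that compactness yields extrema of $\mu_{X}$, that extrema on a boundaryless manifold are critical points, and then applying the proposition at $x_{\max}$ and $x_{\min}$. Nothing further is needed.
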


In particular we obtain the following

\begin{corollary}
If $\omega$ is symplectic and integral, and $M$ is compact, then for any
$X\in\ker\exp_{G}$ we have $\max_{M}(\mu_{X})-\min_{M}(\mu_{X})\in\mathbb{Z}$.
\end{corollary}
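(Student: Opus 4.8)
The plan is to reduce the statement directly to the previous corollary, using integrality only to guarantee that a prequantization bundle exists so that the invariant $\Delta^{\Xi,\mu}$ is defined. First I would invoke the Weil--Kostant theorem: since $\omega$ is integral, there is a principal $U(1)$-bundle $P\rightarrow M$ with connection $\Xi$ such that $\mathrm{curv}(\Xi)=\omega$. Fixing a moment map $\mu$ for $\omega$, this places us exactly in the setting of the preceding results, where the homomorphism $\Delta^{\Xi,\mu}\colon\ker\exp_{G}\rightarrow\mathbb{R}/\mathbb{Z}$ is defined and where $\omega=\mathrm{curv}(\Xi)$ is symplectic by hypothesis.

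Next I would apply the previous corollary. Because $M$ is compact and $\omega$ is symplectic, it yields, for every $X\in\ker\exp_{G}$, the two congruences
$$\Delta^{\Xi,\mu}(X)=\max_{M}(\mu_{X})\operatorname{mod}\mathbb{Z},\qquad\Delta^{\Xi,\mu}(X)=\min_{M}(\mu_{X})\operatorname{mod}\mathbb{Z}.$$
Subtracting these two equalities in $\mathbb{R}/\mathbb{Z}$ gives $\max_{M}(\mu_{X})\equiv\min_{M}(\mu_{X})\pmod{\mathbb{Z}}$, which is precisely the assertion $\max_{M}(\mu_{X})-\min_{M}(\mu_{X})\in\mathbb{Z}$.

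The one point requiring care — and it is really the heart of the matter, already settled in the previous corollary rather than here — is why $\Delta^{\Xi,\mu}(X)$ equals \emph{both} the maximum and the minimum of $\mu_{X}$ modulo $\mathbb{Z}$. Both extrema of the smooth function $\mu_{X}$ on the compact manifold $M$ are attained at critical points, and at a critical point $x$ one has $d\mu_{X}(x)=i_{X_{M}}\omega(x)=0$; nondegeneracy of $\omega$ forces $X_{M}(x)=0$, so Proposition \ref{calculo} gives $\Delta^{\Xi,\mu}(X)=\mu_{X}(x)\operatorname{mod}\mathbb{Z}$ at any such $x$, independently of whether $x$ realizes the maximum or the minimum. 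Consequently I expect no genuine obstacle at this stage: the present corollary follows immediately from that identity, with integrality entering only through the existence of the pair $(P,\Xi)$.
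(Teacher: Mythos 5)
Your proof is correct and follows the paper's own route: the corollary is stated as an immediate consequence of the preceding one, with integrality serving only to produce a pre-quantization $(P,\Xi)$ via Weil--Kostant so that $\Delta^{\Xi,\mu}$ is defined, and the conclusion then follows by subtracting the two congruences $\Delta^{\Xi,\mu}(X)=\max_{M}(\mu_{X})=\min_{M}(\mu_{X})\operatorname{mod}\mathbb{Z}$. Your added justification of why both extrema give the same value (critical points of $\mu_{X}$ are zeros of $X_{M}$ by nondegeneracy, then Proposition \ref{calculo}) is exactly the content of the paper's preceding proposition and corollary.
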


We recall that a Lie group is called exponential if the exponential map
$\exp_{G}\colon\mathfrak{g}\rightarrow G$ is surjective. For example, if $G$
is compact and connected, then $G$ and $\tilde{G}$\ are exponential (e.g. see
\cite[Proposition 6.10]{Helgason}). We say that $G$ is w-exponential if
$\ker\rho_{G}\subset\exp_{\tilde{G}}(\mathfrak{g})$. For example, any compact
group is w-exponential. For a w-exponential group $G$ we have $\exp_{\tilde
{G}}(\ker\exp_{G})=\ker\rho_{G}$ and hence it is equivalent to work with
$\Delta^{\Xi,\mu}$ or with $\Lambda^{\Xi,\mu}$.

\begin{theorem}
If $G$ is w-exponential, then the action of $G$ on $M$ admits a $(\Xi,\mu
)$-lift to $P$ if and only if $\Delta^{\Xi,\mu}(X)=0\operatorname{mod}%
\mathbb{Z}$ for any $X\in\ker\exp_{G}$.
\end{theorem}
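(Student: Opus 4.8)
The plan is to deduce the theorem directly from the earlier Proposition, which characterizes the existence of a $(\Xi,\mu)$-lift as the vanishing of the group homomorphism $\Lambda^{\Xi,\mu}\colon\ker\rho_{G}\simeq H_{1}(G,\mathbb{Z})\rightarrow\mathbb{R}/\mathbb{Z}$, combined with the definition $\Delta^{\Xi,\mu}(X)=\Lambda^{\Xi,\mu}(\exp_{\tilde{G}}X)$ and the identity $\exp_{\tilde{G}}(\ker\exp_{G})=\ker\rho_{G}$ valid for w-exponential $G$. Thus it suffices to show that $\Lambda^{\Xi,\mu}$ vanishes on all of $\ker\rho_{G}$ if and only if $\Delta^{\Xi,\mu}$ vanishes on all of $\ker\exp_{G}$.

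First I would dispatch the forward implication, which needs no hypothesis on $G$: if $\Lambda^{\Xi,\mu}$ vanishes, then because $\exp_{\tilde{G}}(\ker\exp_{G})\subset\ker\rho_{G}$ we get $\Delta^{\Xi,\mu}(X)=\Lambda^{\Xi,\mu}(\exp_{\tilde{G}}X)=0\operatorname{mod}\mathbb{Z}$ for every $X\in\ker\exp_{G}$.

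For the converse, where w-exponentiality enters, I would assume $\Delta^{\Xi,\mu}(X)=0\operatorname{mod}\mathbb{Z}$ for all $X\in\ker\exp_{G}$ and take an arbitrary $\phi\in\ker\rho_{G}$. The equality $\exp_{\tilde{G}}(\ker\exp_{G})=\ker\rho_{G}$ lets me write $\phi=\exp_{\tilde{G}}X$ with $X\in\ker\exp_{G}$, so that $\Lambda^{\Xi,\mu}(\phi)=\Delta^{\Xi,\mu}(X)=0\operatorname{mod}\mathbb{Z}$. Since $\phi$ was arbitrary, $\Lambda^{\Xi,\mu}$ vanishes on $\ker\rho_{G}$, and the earlier Proposition yields the desired $(\Xi,\mu)$-lift.

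The one genuinely delicate point is the converse direction, which is false for a general connected $G$: without w-exponentiality an element $\phi\in\ker\rho_{G}$ need not lie in the image of $\exp_{\tilde{G}}$, and then $\Lambda^{\Xi,\mu}(\phi)$ cannot be recovered from the infinitesimal data $\Delta^{\Xi,\mu}$. The entire force of the hypothesis is the surjectivity statement $\ker\rho_{G}\subset\exp_{\tilde{G}}(\mathfrak{g})$, which together with the always-valid inclusion $\exp_{\tilde{G}}(\ker\exp_{G})\subset\ker\rho_{G}$ upgrades to the equality used above. Once that identity is available the remaining argument is a one-line translation between $\Lambda^{\Xi,\mu}$ and $\Delta^{\Xi,\mu}$.
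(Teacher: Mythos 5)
Your argument is correct and is exactly the reasoning the paper intends: the theorem is stated there without proof because it follows immediately from the proposition characterizing the $(\Xi,\mu)$-lift via the vanishing of $\Lambda^{\Xi,\mu}$ on $\ker\rho_G$, the definition $\Delta^{\Xi,\mu}=\Lambda^{\Xi,\mu}\circ\exp_{\tilde G}$, and the identity $\exp_{\tilde G}(\ker\exp_G)=\ker\rho_G$ for w-exponential groups. Your write-up simply makes explicit the translation the paper leaves implicit, including the correct observation that only the converse direction uses w-exponentiality.
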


\begin{example}
\label{Trivial}Let $M$ be a connected manifold in which $G=S^{1}$ acts. We set
$\omega=0$ and $\mu^{c}\colon\mathbb{R}\rightarrow\Omega^{0}(M)$ given by
$\mu_{X}^{c}(x)=cX$ for any $x\in M$ and $X\in\mathbb{R}$. Clearly $\omega$ is
$G$-invariant, $\mu^{c}$ is a moment map for $\omega$ and $\ker\exp_{S^{1}%
}=2\pi\mathbb{Z}$. Let $(P,\Xi)$ be the trivial bundle and connection. If
$X=2\pi z$ then we have $\Delta^{\Xi,\mu^{c}}(X)=\mu_{X}^{c}(x)-\mathrm{hol}%
^{\Xi}(\tau_{x,X})\operatorname{mod}\mathbb{Z}=2\pi zc\operatorname{mod}%
\mathbb{Z}$. We conclude that the action admits a $(\Xi,\mu)$-lift to $P$ if
and only if $2\pi c\in\mathbb{Z}$.
\end{example}

\begin{corollary}
If $G$ is w-exponential and $M^{G}\neq\emptyset$, then the action admits a
$(\Xi,\mu)$-lift to $P$ if and only if for a fixed point $x\in M^{G}$ we have
$\mu_{X}(x)\in\mathbb{Z}$ for any $X\in\ker\exp_{G}$.
\end{corollary}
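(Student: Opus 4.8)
The plan is to combine the characterization of liftability in the preceding Theorem with the explicit evaluation of $\Delta^{\Xi,\mu}$ at a fixed point. First I would invoke that Theorem: since $G$ is w-exponential, the action admits a $(\Xi,\mu)$-lift to $P$ if and only if $\Delta^{\Xi,\mu}(X)=0\operatorname{mod}\mathbb{Z}$ for every $X\in\ker\exp_{G}$. This reduces the corollary to showing that the vanishing condition on $\Delta^{\Xi,\mu}$ over $\ker\exp_{G}$ is equivalent to the integrality condition $\mu_{X}(x)\in\mathbb{Z}$ at a chosen fixed point $x\in M^{G}$.

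Next I would evaluate $\Delta^{\Xi,\mu}$ at the fixed point. Fix any $x\in M^{G}$. Since $x$ is fixed by the whole group, $X_{M}(x)=0$ for every $X\in\mathfrak{g}$, so in particular for $X\in\ker\exp_{G}$ the curve $\tau_{x,X}(s)=\exp(sX)_{M}(x)$ is the constant curve at $x$, and Proposition \ref{calculo} gives $\Delta^{\Xi,\mu}(X)=\mu_{X}(x)\operatorname{mod}\mathbb{Z}$. Consequently $\Delta^{\Xi,\mu}(X)=0\operatorname{mod}\mathbb{Z}$ holds precisely when $\mu_{X}(x)\in\mathbb{Z}$, and assembling the two steps yields the stated equivalence for every $X\in\ker\exp_{G}$.

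The only point that deserves a remark is that the condition does not depend on which fixed point $x\in M^{G}$ is used; this is immediate, because the left-hand side $\Delta^{\Xi,\mu}(X)$ is independent of $x$, so $\mu_{X}(x)\operatorname{mod}\mathbb{Z}$ takes the same value at every fixed point (this is also the content of the earlier Corollary on differences $\mu_{X}(x)-\mu_{X}(x^{\prime})$). I do not expect any genuine obstacle here, since the argument is a direct assembly of the Theorem and Proposition \ref{calculo}: the substantive work was already carried out in establishing those results, and in particular in the identification (via Proposition \ref{InfHolonomy}) of the equivariant holonomy of $\tau_{x,X}$ with the moment-map value, which degenerates cleanly at a fixed point where $\tau_{x,X}$ is constant.
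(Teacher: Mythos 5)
Your proposal is correct and is exactly the argument the paper intends: the corollary is stated without an explicit proof, but it follows immediately from the preceding Theorem (the $(\Xi,\mu)$-lift exists iff $\Delta^{\Xi,\mu}$ vanishes on $\ker\exp_{G}$ for w-exponential $G$) combined with Proposition \ref{calculo}, part e), evaluated at a fixed point where $X_{M}(x)=0$. Your remark on the independence of the choice of fixed point is also consistent with the paper's earlier observation that $\Delta^{\Xi,\mu}(X)$ does not depend on $x$.
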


If $G$ is a torus and $\omega$ is symplectic, the Atiyah-Guillemin-Sternberg
convexity Theorem implies the existence of fixed points, and hence the
preceding theorem can be applied.\ For non abelian groups we cannot assert
that $M^{G}\neq\emptyset$, but we have the following

\begin{corollary}
If $G$ is w-exponential, $M$ is compact and $\omega$ is symplectic, then the
action admits a $(\Xi,\mu)$-lift to $P$ if and only if for any $X\in\ker
\exp_{G}$ we have $\max_{M}(\mu_{X})\in\mathbb{Z}$.
\end{corollary}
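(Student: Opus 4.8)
The plan is to combine the two immediately preceding results, so essentially no new computation is needed. First I would invoke the Theorem for w-exponential groups, which says that under the w-exponentiality hypothesis on $G$ the existence of a $(\Xi,\mu)$-lift is equivalent to the single vanishing condition $\Delta^{\Xi,\mu}(X)=0\operatorname{mod}\mathbb{Z}$ for every $X\in\ker\exp_{G}$. This is exactly where w-exponentiality is used: it guarantees $\exp_{\tilde{G}}(\ker\exp_{G})=\ker\rho_{G}$, so that testing $\Delta^{\Xi,\mu}$ on $\ker\exp_{G}$ detects the vanishing of $\Lambda^{\Xi,\mu}$ on the whole of $\ker\rho_{G}\simeq H_{1}(G,\mathbb{Z})$ and no class in the fundamental group escapes the test.

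Next I would bring in the hypotheses that $M$ is compact and $\omega=\mathrm{curv}(\Xi)$ is symplectic and apply the Corollary that identifies $\Delta^{\Xi,\mu}(X)$ with $\max_{M}(\mu_{X})\operatorname{mod}\mathbb{Z}$ for each $X\in\ker\exp_{G}$. The content behind that identity is that compactness makes the maximum attained, while non-degeneracy of $\omega$ forces any critical point of $\mu_{X}$ — in particular a maximizer — to satisfy $X_{M}=0$, so that Proposition~\ref{calculo}(e) yields $\Delta^{\Xi,\mu}(X)=\mu_{X}(x)\operatorname{mod}\mathbb{Z}$ there; since I may assume that earlier Corollary, I would simply quote it.

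Substituting this identity into the reduced lifting criterion, the condition $\Delta^{\Xi,\mu}(X)=0\operatorname{mod}\mathbb{Z}$ becomes $\max_{M}(\mu_{X})=0\operatorname{mod}\mathbb{Z}$, i.e.\ $\max_{M}(\mu_{X})\in\mathbb{Z}$. Assembling the chain of equivalences over all $X\in\ker\exp_{G}$ then gives the stated characterization. I do not expect a genuine obstacle, since both ingredients are already in hand; the only points deserving a word of care are that the matching universal quantifier over $X\in\ker\exp_{G}$ is respected on both sides, and that no separate integrality hypothesis on $\omega$ is needed here — it is automatic, because $\omega=\mathrm{curv}(\Xi)$ for the given bundle $P$ and is therefore integral by Weil--Kostant.
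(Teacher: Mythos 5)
Your proposal is correct and follows exactly the route the paper intends: the corollary is an immediate combination of the theorem for w-exponential groups (lift exists iff $\Delta^{\Xi,\mu}(X)=0\operatorname{mod}\mathbb{Z}$ for all $X\in\ker\exp_{G}$) with the preceding corollary identifying $\Delta^{\Xi,\mu}(X)$ with $\max_{M}(\mu_{X})\operatorname{mod}\mathbb{Z}$ when $M$ is compact and $\omega$ is symplectic. Your closing remarks on the quantifier and on integrality being automatic from $\omega=\mathrm{curv}(\Xi)$ are also accurate.
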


\section{Lifting the action with an arbitrary moment map}

In this Section we assume that $G$ is a connected and compact Lie group. Let
$P\rightarrow M$ be a principal $U(1)$-bundle, $\Xi$ a connection on $P$ and
$\omega=\mathrm{curv}(\Xi)$. We say that the action of $G$ on $M$ admits a
$\Xi$-lift to $P$ if there exists a lift of the action of $G$ to $P\rightarrow
M$ by automorphisms such that $\Xi$ is $G$-invariant. In this case
$\mathrm{Mom}^{\Xi}$\ is a moment map for $\omega$ and hence, there exists a
$\Xi$-lift if and only if there exists a $(\Xi,\mu)$-lift for a moment map
$\mu$.

We assume that the action is Hamiltonian and that $\mu$ is a moment map for
$\omega$. As we have seen, the obstruction to the existence of a $(\Xi,\mu
)$-lift is given by $\Delta^{\Xi,\mu}\in\mathrm{Hom}(H_{1}(G,\mathbb{Z}%
),\mathbb{R}/\mathbb{Z})$. Any other moment map is given by $\mu^{\prime}%
=\mu+b$ where $b\in H^{1}(\mathfrak{g})$, i.e., $b\in\mathrm{Hom}%
(\mathfrak{g}$,$\mathbb{R})$ satisfies the condition $b([X,Y])=0$ for any
$X,Y\in\mathfrak{g}$. If $\Lambda^{\Xi,\mu}\neq0$, we can try to use $b$ to
cancel $\Lambda^{\Xi,\mu}$, but only for the elements that are not torsion.
Precisely, let $T_{G}$ be the torsion subgroup of $\ker\rho_{G}\simeq
H^{1}(G,\mathbb{Z})$. At the Lie algebra level, we have $\Delta^{\Xi
,\mu^{\prime}}=\Delta^{\Xi,\mu}+b\operatorname{mod}\mathbb{Z}$ and we define
$\mathfrak{T}_{G}$ as the space of $X\in\ker\exp_{G}$ such that there exists
$n\in\mathbb{N}$ with $nX\in\ker\exp_{\tilde{G}}$. Then we have $\exp
_{\tilde{G}}(\mathfrak{T}_{G})=T_{G}$ and we have the following result

\begin{lemma}
\label{lema b}If $X\in\mathfrak{T}_{G}$ and $b\in H^{1}(\mathfrak{g})$ then
$b(X)=0$.
\end{lemma}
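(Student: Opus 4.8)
The plan is to reduce the statement to the structure theory of compact Lie algebras and to an explicit description of $\ker\exp_{\tilde{G}}$. Since $G$ is compact and connected, $\mathfrak{g}$ is reductive and splits as a direct sum of ideals $\mathfrak{g}=\mathfrak{z}\oplus\lbrack\mathfrak{g},\mathfrak{g}]$, where $\mathfrak{z}$ is the center and $[\mathfrak{g},\mathfrak{g}]$ is compact semisimple. By the definition of $H^{1}(\mathfrak{g})$ recalled above, a form $b\in H^{1}(\mathfrak{g})$ is a linear functional vanishing on $[\mathfrak{g},\mathfrak{g}]$. Hence it suffices to prove that every $X\in\mathfrak{T}_{G}$ lies in $[\mathfrak{g},\mathfrak{g}]$; once this is established, $b(X)=0$ is immediate.

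The key step is to locate $\ker\exp_{\tilde{G}}$ inside $[\mathfrak{g},\mathfrak{g}]$, and for this I would describe the universal cover explicitly. Because $[\mathfrak{g},\mathfrak{g}]$ is compact semisimple, Weyl's theorem guarantees that the associated simply connected group $G_{ss}$ is compact, so $\tilde{G}\cong\mathbb{R}^{m}\times G_{ss}$ with $m=\dim\mathfrak{z}$, the $\mathbb{R}^{m}$ factor corresponding to the central ideal $\mathfrak{z}$. The exponential map of a direct product is the product of the exponential maps, and on the abelian factor $\exp$ is the identity; hence $\exp_{\tilde{G}}(Z,Y)=(Z,\exp_{G_{ss}}(Y))$ for $Z\in\mathfrak{z}$ and $Y\in\lbrack\mathfrak{g},\mathfrak{g}]$. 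This equals the identity only when $Z=0$, which shows that $\ker\exp_{\tilde{G}}\subset\lbrack\mathfrak{g},\mathfrak{g}]$.

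To finish, I would take $X\in\mathfrak{T}_{G}$, so that by definition $nX\in\ker\exp_{\tilde{G}}$ for some $n\in\mathbb{N}$. By the previous step $nX\in\lbrack\mathfrak{g},\mathfrak{g}]$, and since $[\mathfrak{g},\mathfrak{g}]$ is a linear subspace this forces $X\in\lbrack\mathfrak{g},\mathfrak{g}]$; therefore $b(X)=0$. The only genuinely delicate point is the identification of $\ker\exp_{\tilde{G}}$: it rests on the fact that the central directions exponentiate injectively in the non-compact universal cover (the $\mathbb{R}^{m}$ factor), so that the kernel of $\exp_{\tilde{G}}$ is confined to the semisimple part. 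Everything else is routine once the reductive splitting and the compactness of $G_{ss}$ are in hand.
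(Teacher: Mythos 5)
Your proof is correct, but it takes a genuinely different route from the paper's. The paper argues indirectly through the geometric invariants: since $nX\in\ker\exp_{\tilde{G}}$ forces $n\Delta^{\Xi,\mu'}(X)=0\bmod\mathbb{Z}$ for \emph{every} moment map $\mu'$, and replacing $\mu$ by $\mu+\lambda b$ shifts $\Delta^{\Xi,\mu}(X)$ by $\lambda b(X)$, one gets $n\lambda b(X)\in\mathbb{Z}$ for all $\lambda\in\mathbb{R}$, hence $b(X)=0$. You instead prove the purely Lie-theoretic fact underlying the lemma: writing $\mathfrak{g}=\mathfrak{z}\oplus[\mathfrak{g},\mathfrak{g}]$ and $\tilde{G}\cong\mathbb{R}^{m}\times G_{ss}$ (with $G_{ss}$ compact by Weyl's theorem), the central coordinate of $\exp_{\tilde{G}}$ is injective, so $\ker\exp_{\tilde{G}}\subset[\mathfrak{g},\mathfrak{g}]$, hence $\mathfrak{T}_{G}\subset[\mathfrak{g},\mathfrak{g}]$, on which every $b\in H^{1}(\mathfrak{g})$ vanishes. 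All the ingredients you invoke are standard and correctly applied. Your version has the advantage of being self-contained: it does not presuppose the existence of a pre-quantization $(P,\Xi)$ or of a moment map (standing hypotheses of that section which the paper's proof silently relies on), and it yields the sharper structural statement $\ker\exp_{\tilde{G}}\subset[\mathfrak{g},\mathfrak{g}]$. The paper's version has the advantage of using only the formal properties of $\Delta^{\Xi,\mu}$ already established, with no structure theory of compact Lie algebras.
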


\begin{proof}
If $X\in\mathfrak{T}_{G}$ and $nX\in\ker\exp_{\tilde{G}}$ then for any moment
map $\mu$ we have $n\Delta^{\Xi,\mu}(X)=\Delta^{\Xi,\mu}%
(nX)=0\operatorname{mod}\mathbb{Z}$. If $\mu^{\prime}=\mu+\lambda b$ then
$0=n\Delta^{\Xi,\mu^{\prime}}(X)-n\Delta^{\Xi,\mu}(X)=n\lambda
b(X)\operatorname{mod}\mathbb{Z}$ for any $\lambda\in\mathbb{R}$, and hence
$b(X)=0$.
\end{proof}

Then we have the following

\begin{theorem}
\label{Th torsion}Let $G$ be a compact and connected Lie group. Then

a) The restriction of $\Lambda^{\Xi,\mu}$ to $T_{G}$ is independent of the
moment map $\mu$.

b) There exists a $\Xi$-lift to $P$ if and only if $\Lambda^{\Xi,\mu}(\phi)=0$
for any $\phi\in T_{G}$.

c) There exists a $\Xi$-lift to $P$ if and only if $\Delta^{\Xi,\mu}(X)=0$ for
any $X\in\mathfrak{T}_{G}.$
\end{theorem}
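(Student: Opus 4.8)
The plan is to treat part (a), the forward implication of (b), and part (c) as formal consequences of the results of Section \ref{Sect Lift} together with Lemma \ref{lema b}, and to concentrate the real work on the converse of (b): showing that whenever $\Lambda^{\Xi,\mu}$ vanishes on the torsion subgroup $T_{G}$, one can choose the moment map so as to kill $\Lambda$ entirely.

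For (a), recall that any two moment maps differ by some $b\in H^{1}(\mathfrak{g})$ and that $\Delta^{\Xi,\mu^{\prime}}=\Delta^{\Xi,\mu}+b$. By Lemma \ref{lema b} we have $b(X)=0$ for every $X\in\mathfrak{T}_{G}$, so $\Delta^{\Xi,\mu}$ and $\Delta^{\Xi,\mu^{\prime}}$ agree on $\mathfrak{T}_{G}$; since $\exp_{\tilde{G}}(\mathfrak{T}_{G})=T_{G}$ and $\Delta^{\Xi,\mu}(X)=\Lambda^{\Xi,\mu}(\exp_{\tilde{G}}X)$, this gives (a). For the forward implication of (b), if a $\Xi$-lift exists then there is a $(\Xi,\mu^{\prime})$-lift for some $\mu^{\prime}$, so $\Lambda^{\Xi,\mu^{\prime}}\equiv0$; by (a) its restriction to $T_{G}$ coincides with that of $\Lambda^{\Xi,\mu}$, whence $\Lambda^{\Xi,\mu}$ vanishes on $T_{G}$. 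Part (c) then follows from (b) through the same dictionary $\Delta^{\Xi,\mu}(X)=\Lambda^{\Xi,\mu}(\exp_{\tilde{G}}X)$ and the equality $\exp_{\tilde{G}}(\mathfrak{T}_{G})=T_{G}$.

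The heart of the matter is the converse of (b). First I would record, using w-exponentiality of the compact group $G$, that for $\phi\in\ker\rho_{G}$ and any lift $X\in\ker\exp_{G}$ with $\exp_{\tilde{G}}X=\phi$ one has $\Lambda^{\Xi,\mu+b}(\phi)=\Lambda^{\Xi,\mu}(\phi)+b(X)$ modulo $\mathbb{Z}$. Since $b$ annihilates $[\mathfrak{g},\mathfrak{g}]\supset\ker\exp_{\tilde{G}}$, the value $b(X)$ depends only on $\phi$, so passing from $\mu$ to $\mu+b$ alters $\Lambda^{\Xi,\mu}$ by the homomorphism $B_{b}\colon\phi\mapsto b(P(\phi))$, where $P\colon\ker\rho_{G}\to\mathfrak{z}$ sends $\phi$ to the $\mathfrak{z}$-component of any lift $X$ (well defined because $\ker\exp_{\tilde{G}}\subset[\mathfrak{g},\mathfrak{g}]$). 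The key structural input is the decomposition $\mathfrak{g}=\mathfrak{z}\oplus[\mathfrak{g},\mathfrak{g}]$ of a compact Lie algebra, which identifies $H^{1}(\mathfrak{g})$ with $\mathfrak{z}^{\ast}$, together with the fact that $P$ has kernel exactly $T_{G}$ and image a full lattice $\Lambda_{\mathfrak{z}}\subset\mathfrak{z}$, so that $P$ induces an isomorphism $\ker\rho_{G}/T_{G}\xrightarrow{\ \sim\ }\Lambda_{\mathfrak{z}}$ and in particular $\mathrm{rank}\,\pi_{1}(G)=\dim\mathfrak{z}$. Granting the hypothesis $\Lambda^{\Xi,\mu}|_{T_{G}}=0$, the homomorphism $\Lambda^{\Xi,\mu}$ descends through $P$ to $\lambda\colon\Lambda_{\mathfrak{z}}\to\mathbb{R}/\mathbb{Z}$; choosing $b\in\mathfrak{z}^{\ast}$ with $b|_{\Lambda_{\mathfrak{z}}}=-\lambda$ (possible because any homomorphism from a lattice to $\mathbb{R}/\mathbb{Z}$ lifts to a linear functional, by prescribing its values freely on a basis) yields $B_{b}=-\Lambda^{\Xi,\mu}$ and hence $\Lambda^{\Xi,\mu+b}\equiv0$. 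Since vanishing of $\Lambda^{\Xi,\mu+b}$ is equivalent to the existence of a $(\Xi,\mu+b)$-lift, we obtain in particular the desired $\Xi$-lift.

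The main obstacle is exactly this converse: one must verify that the free part of $\pi_{1}(G)\simeq\ker\rho_{G}$ is faithfully detected by the projection $P$ onto the centre $\mathfrak{z}$, and that the rank of this free part equals $\dim\mathfrak{z}=\dim H^{1}(\mathfrak{g})$. It is this numerical coincidence, special to compact groups, that provides enough freedom in $b\in H^{1}(\mathfrak{g})$ to cancel $\Lambda^{\Xi,\mu}$ on all non-torsion classes, while Lemma \ref{lema b} forbids any change on $T_{G}$. Establishing $\ker P=T_{G}$ and $\mathrm{im}\,P=\Lambda_{\mathfrak{z}}$ cleanly requires the structure theory of compact connected Lie groups (writing $G$ as a quotient of $T^{k}\times\tilde{S}$ by a finite central subgroup), and this bookkeeping is where I expect the only genuine difficulty to lie.
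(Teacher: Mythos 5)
Your argument is correct, and its reduction steps (deriving (a) from Lemma \ref{lema b}, deducing the forward direction of (b) from (a), and translating between (b) and (c) via $\exp_{\tilde{G}}(\mathfrak{T}_{G})=T_{G}$) are exactly those of the paper's proof. Where you genuinely diverge is in the converse of (b). The paper identifies $H^{1}(\mathfrak{g})$ with $H^{1}(G,\mathbb{R})\simeq\mathbb{R}^{k}$ via invariant $1$-forms, chooses generators $\phi_{i}=\exp_{\tilde{G}}(X_{i})$ of the free part of $\ker\rho_{G}$, and solves $b(X_{i})=\int_{\theta_{i}}\alpha_{b}=-c_{i}\bmod\mathbb{Z}$ using the de Rham pairing between $H^{1}(G,\mathbb{R})$ and $H_{1}(G,\mathbb{Z})/T_{G}$; the surjectivity of $b\mapsto(b(X_{1}),\ldots,b(X_{k}))$ is left implicit there. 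You instead work entirely inside the Lie algebra: $H^{1}(\mathfrak{g})\simeq\mathfrak{z}^{\ast}$ via the splitting $\mathfrak{g}=\mathfrak{z}\oplus[\mathfrak{g},\mathfrak{g}]$, and the projection $P$ of $\ker\rho_{G}$ onto a full-rank lattice in $\mathfrak{z}$ with kernel $T_{G}$ plays the role of the period pairing. This buys an explicit proof of the surjectivity the paper takes for granted (prescribing $b$ on a lattice basis), at the cost of invoking the structure theory $G\simeq(T^{k}\times\tilde{S})/F$; both arguments ultimately rest on the same numerical coincidence $\operatorname{rank}\pi_{1}(G)=\dim H^{1}(\mathfrak{g})$. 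One small imprecision: two lifts $X,X^{\prime}\in\ker\exp_{G}$ of the same $\phi$ need not differ by an element of $\ker\exp_{\tilde{G}}$ (the group is non-abelian), so your parenthetical justification of the well-definedness of $P$ is not quite right as stated; the correct argument is that, writing $X=Z+Y$ with $Z\in\mathfrak{z}$ central and $Y\in[\mathfrak{g},\mathfrak{g}]$, one has $\exp_{\tilde{G}}(X)=\exp_{\tilde{G}}(Z)\exp_{\tilde{G}}(Y)$, and the $\mathbb{R}^{k}$-factor of $\tilde{G}$ pins down $Z$ uniquely. This is a one-line repair and does not affect the validity of the proof.
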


\begin{proof}
Clearly c) and b) are equivalent and a) follows form Lemma \ref{lema b}. We
prove b). We only need to prove that if $\Lambda^{\Xi,\mu}(\phi)=0$ for any
$\phi\in T_{G}$ then there exists a $\Xi$-lift to $P$ (the converse follows
from a)). We know that $H_{1}(G,\mathbb{Z)}\simeq\ker\rho_{G}\simeq
T_{G}\oplus\mathbb{Z}^{k}$. We choose a system of generators $\phi_{1}%
\ldots,\phi_{k}$ for the free part of $\ker\rho_{G}$. If $G$ is compact,
$H^{1}(\mathfrak{g})$ it can be identified with $H^{1}(G,\mathbb{R}%
)\simeq\mathbb{R}^{k}$ by the map that assigns to $b\in H^{1}(\mathfrak{g})$
the $G$-invariant $1$-form $\alpha_{b}$ such that $\alpha_{b}|_{\mathfrak{g}%
}=b$. Hence, if $\Lambda^{\Xi,\mu}(\phi_{i})=c_{i}$, and $\phi_{i}%
=\exp_{\tilde{G}}(X_{i})$,\ then there exists $b\in H^{1}(\mathfrak{g})\simeq
H^{1}(G,\mathbb{R})$ such that $-c_{i}=\int_{\theta_{i}}\alpha_{b}%
\operatorname{mod}\mathbb{Z}=b(X_{i})\operatorname{mod}\mathbb{Z}$, where
$\theta_{i}(t)=\exp(tX_{i})$. We set $\mu^{\prime}=\mu+b$ and we have
$\Lambda^{\Xi,\mu^{\prime}}(\phi_{i})=\Delta^{\Xi,\mu^{\prime}}(X_{i}%
)=\Delta^{\Xi,\mu}(X_{i})+b(X_{i})\operatorname{mod}\mathbb{Z}=c_{i}-c_{i}=0$.
Furthermore, if the restriction of $\Lambda^{\Xi,\mu}$ to $T_{G}$ vanish, then
we have $\Lambda^{\Xi,\mu^{\prime}}(\phi)=0$ for any $\phi\in\ker\rho_{G}$.
\end{proof}

If $P_{i}\rightarrow M$, are $U(1)$-bundles with connections $\Xi_{i}$ and
moment maps $\mu_{i}$ for $\mathrm{curv}(\Xi_{i})$, $i=1,2$ then $\mu_{1}%
+\mu_{2}$ is a moment map for $\mathrm{curv}(\Xi_{1}\otimes\Xi_{2}%
)=\mathrm{curv}(\Xi_{1})+\mathrm{curv}(\Xi_{2})$ and we have $\Delta^{\Xi
_{1}\otimes\Xi_{2},\mu_{1}+\mu_{2}}=\Delta^{\Xi_{1},\mu_{1}}+\Delta^{\Xi
_{2},\mu_{2}}$. As a consequence of Theorem \ref{Th torsion} we obtain the
following classical result (e.g. see \cite{Gotay},\cite{Mundet})

\begin{corollary}
\label{power}If $G$ is a compact and connected Lie group then there exists
$r_{G}\in\mathbb{N}$ such that for any Hamiltonian action of $G$ on
$(M,\omega)$ and any pre-quantization bundle $(P,\Xi)$ for $\omega$ there
exists a $\Xi^{\otimes r_{G}}$-lift of the action to $P^{\otimes r_{G}}$.
\end{corollary}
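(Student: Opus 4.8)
The plan is to exhibit a single integer $r_{G}$, depending only on the group $G$, that uniformly kills the torsion part of every obstruction, and then invoke Theorem \ref{Th torsion}. Since $G$ is compact and connected, $\pi_{1}(G)\simeq H_{1}(G,\mathbb{Z})\simeq\ker\rho_{G}$ is finitely generated, so its torsion subgroup $T_{G}$ is a \emph{finite} abelian group. I would therefore set $r_{G}$ equal to the exponent of $T_{G}$ (the least positive integer with $r_{G}\phi=0$ for all $\phi\in T_{G}$); crucially this number is an invariant of $G$ alone, and has nothing to do with $M$, $\omega$, $\mu$, or $(P,\Xi)$. I would also record the consequence for the Lie-algebra picture: since $\exp_{\tilde{G}}(\mathfrak{T}_{G})=T_{G}$ and $\exp_{\tilde{G}}(r_{G}X)=\exp_{\tilde{G}}(X)^{r_{G}}$, every $X\in\mathfrak{T}_{G}$ satisfies $r_{G}X\in\ker\exp_{\tilde{G}}$.

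Next I would pass to the tensor power. Given a Hamiltonian action on $(M,\omega)$ with moment map $\mu$ and a pre-quantization $(P,\Xi)$, the bundle $P^{\otimes r_{G}}$ with connection $\Xi^{\otimes r_{G}}$ has curvature $r_{G}\omega$ and admits $r_{G}\mu$ as a moment map. Iterating the additivity relation $\Delta^{\Xi_{1}\otimes\Xi_{2},\mu_{1}+\mu_{2}}=\Delta^{\Xi_{1},\mu_{1}}+\Delta^{\Xi_{2},\mu_{2}}$ noted above yields $\Delta^{\Xi^{\otimes r_{G}},r_{G}\mu}=r_{G}\,\Delta^{\Xi,\mu}$ on $\ker\exp_{G}$. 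By Theorem \ref{Th torsion}(c), the existence of a $\Xi^{\otimes r_{G}}$-lift to $P^{\otimes r_{G}}$ is then equivalent to the vanishing of $r_{G}\,\Delta^{\Xi,\mu}(X)$ for every $X\in\mathfrak{T}_{G}$, so this is the only identity left to verify.

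Finally I would close the argument using the homomorphism property. Because $\Delta^{\Xi,\mu}(X)=\Lambda^{\Xi,\mu}(\exp_{\tilde{G}}X)$ and $\Lambda^{\Xi,\mu}\colon\ker\rho_{G}\to\mathbb{R}/\mathbb{Z}$ is a group homomorphism, for $X\in\mathfrak{T}_{G}$ we get $r_{G}\,\Delta^{\Xi,\mu}(X)=\Lambda^{\Xi,\mu}(\exp_{\tilde{G}}(X)^{r_{G}})=\Lambda^{\Xi,\mu}(1_{\tilde{G}})=0$, the middle step using $r_{G}X\in\ker\exp_{\tilde{G}}$ established in the first paragraph. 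Hence $\Delta^{\Xi^{\otimes r_{G}},r_{G}\mu}$ vanishes on all of $\mathfrak{T}_{G}$, and Theorem \ref{Th torsion}(c) provides the desired lift. The essential point—and the only place any real content enters—is the \emph{uniformity} of $r_{G}$: the restriction of the obstruction to $T_{G}$ always takes values of order dividing the exponent of the finite group $T_{G}$, a bound that is independent of all the analytic data. I expect no genuine obstacle in the estimates; the care needed is purely in confirming that $r_{G}$ can indeed be chosen independently of $(M,\omega,\mu,P,\Xi)$, which is exactly what the finiteness of $T_{G}$ together with the $\mu$-independence in Theorem \ref{Th torsion}(a) guarantees.
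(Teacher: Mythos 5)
Your proposal is correct and follows exactly the route the paper intends: take $r_{G}$ to be the exponent of the finite torsion group $T_{G}$, use the additivity $\Delta^{\Xi_{1}\otimes\Xi_{2},\mu_{1}+\mu_{2}}=\Delta^{\Xi_{1},\mu_{1}}+\Delta^{\Xi_{2},\mu_{2}}$ to get $\Delta^{\Xi^{\otimes r_{G}},r_{G}\mu}=r_{G}\Delta^{\Xi,\mu}$, and observe via the homomorphism property of $\Lambda^{\Xi,\mu}$ that this vanishes on $\mathfrak{T}_{G}$, so Theorem \ref{Th torsion}(c) applies. The paper leaves these steps implicit, and your write-up supplies them correctly, including the key point that $r_{G}$ depends only on $G$.
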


\begin{example}
If $G=SO(n),$ $n>1$ then $H^{1}(G,\mathbb{Z})\simeq\mathbb{Z}_{2}$, and hence
$r_{SO(n)}=2$.
\end{example}

\begin{example}
If $H^{1}(G,\mathbb{Z})$ does not have torsion, then $r_{G}=1$, and any
Hamiltonian action with integral $\omega$ can be lifted to $P$.
\end{example}

\section{Equivariant pre-quantization}

In the cases in which $\Lambda^{\Xi,\mu}(X)$ only depends on $\mu$ it is easy
to obtain conditions for equivariant pre-quantization from the results of
Section \ref{Sect Lift}. In particular we have the following results:

\begin{theorem}
a) If $G$ is w-exponential, $M$ is compact and $\omega$ is symplectic, then
$(\omega,\mu)$ is\thinspace$G$-equivariant pre-quantizable if and only if
$\omega$ is integral and for any $X\in\ker\exp_{G}$ we have $\max_{M}(\mu
_{X})\in\mathbb{Z}$.

b) If $G$ and $M$ are compact and $\omega$ is symplectic, then $\omega$
is\thinspace$G$-equivariant pre-quantizable if and only if $\omega$ is
integral and for any $X\in\mathfrak{T}_{G}$ we have $\max_{M}(\mu_{X}%
)\in\mathbb{Z}$.
\end{theorem}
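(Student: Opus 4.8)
The plan is to reduce both statements to the lifting criteria established in Section \ref{Sect Lift}, combined with the classical Weil--Kostant integrality theorem. The starting observation is that $(\omega,\mu)$ being $G$-equivariant pre-quantizable means exactly that \emph{some} pre-quantization $(P,\Xi)$ of $\omega$ admits a $(\Xi,\mu)$-lift, while a pre-quantization of $\omega$ exists at all if and only if $\omega$ is integral. So I would first unwind both notions of pre-quantizability into the two-part assertion ``$\omega$ is integral'' together with ``a lift exists for the chosen pre-quantization''.

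For part a) I would proceed as follows. Since $G$ compact is w-exponential and $\omega$ is symplectic, the Corollary computing $\Delta^{\Xi,\mu}$ on a compact symplectic manifold gives $\Delta^{\Xi,\mu}(X)=\max_{M}(\mu_{X})\operatorname{mod}\mathbb{Z}$ for every $X\in\ker\exp_{G}$. For the forward direction, from $G$-equivariant pre-quantizability I extract a pre-quantization $(P,\Xi)$ (which already forces $\omega$ integral) admitting a $(\Xi,\mu)$-lift, and the last Corollary of Section \ref{Sect Lift} then yields $\max_{M}(\mu_{X})\in\mathbb{Z}$. For the converse, integrality of $\omega$ lets me pick any pre-quantization $(P,\Xi)$, and the hypothesis $\max_{M}(\mu_{X})\in\mathbb{Z}$ together with the displayed identity forces $\Delta^{\Xi,\mu}(X)=0$ for all $X\in\ker\exp_{G}$, so the same Corollary supplies a $(\Xi,\mu)$-lift and hence a $G$-equivariant pre-quantization.

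For part b) I would run the identical argument but replace the $(\Xi,\mu)$-lift criterion with the torsion criterion of Theorem \ref{Th torsion}(c): a $\Xi$-lift with unconstrained moment map exists if and only if $\Delta^{\Xi,\mu}(X)=0$ for all $X\in\mathfrak{T}_{G}$, which under the symplectic identity reads $\max_{M}(\mu_{X})\in\mathbb{Z}$ on $\mathfrak{T}_{G}$. Here compactness of $G$ is used both to guarantee w-exponentiality and to invoke Theorem \ref{Th torsion}.

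The step I expect to require the most care is not any computation but the logical point that the stated conditions depend only on $\mu$ and not on the auxiliary bundle: this is precisely what the identity $\Delta^{\Xi,\mu}(X)=\max_{M}(\mu_{X})\operatorname{mod}\mathbb{Z}$ guarantees, its right-hand side being independent of $\Xi$. This is what legitimizes, in the backward directions, first producing an arbitrary pre-quantization from integrality of $\omega$ and only afterward reading off the lifting obstruction from $\mu$ alone; without it one could not decouple the existence of $(P,\Xi)$ from the vanishing of the obstruction.
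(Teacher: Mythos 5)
Your proposal is correct and follows exactly the route the paper intends: the paper states this theorem without a written proof, remarking only that it follows from the Section \ref{Sect Lift} results because $\Delta^{\Xi,\mu}(X)=\max_{M}(\mu_{X})\operatorname{mod}\mathbb{Z}$ depends only on $\mu$, which is precisely the decoupling point you single out. One cosmetic slip: in part a) you write ``since $G$ compact is w-exponential,'' but part a) assumes only w-exponentiality, not compactness of $G$ --- harmless here, since the corollary you invoke needs compactness of $M$ and non-degeneracy of $\omega$, not compactness of $G$.
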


\begin{example}
\label{sphere}We consider the usual symplectic structure on $S^{2}$ given by
the volume form $\mathrm{vol}_{g}\in\Omega^{2}(S^{2})$ and the usual moment
map $h\colon\mathfrak{so}(3)\rightarrow\Omega^{0}(S^{2})$ given by
$h_{X}(x)=\langle\vec{v}_{X},x\rangle$, where $\vec{v}\colon\mathfrak{so}%
(3)\longrightarrow\mathbb{R}^{3}$ is the map that assigns to $X=\left(
\begin{array}
[c]{rrr}%
0 & a & b\\
-a & 0 & c\\
-b & -c & 0
\end{array}
\right)  \in\mathfrak{so}(3)$ the vector $\vec{v}_{X}=(c,-b,a).$

As $\exp X$ is a rotation of angle $||\vec{v}_{X}||$ around the axis
determined by $\vec{v}_{X}$, $\ker(\exp_{G})$ corresponds under $\vec{v}$ to
the union of the spheres of radius $2\pi k$ for $k\in\mathbb{Z}$.
If\ $X\in\ker(\exp_{G})$ and $||\vec{v}_{X}||=2\pi k$\ we have $\max_{M}%
(h_{X})=2\pi k.$

The form $\omega=\lambda\mathrm{vol}_{g}$ is integral if and only if
$\lambda=\frac{n}{4\pi}$ for $n\in\mathbb{Z}$. But in this case we have
$\max_{M}(\lambda h_{X})=\frac{nk}{2}$ that is integer for any $k$ if and only
if $n$ is even. Hence we have

\begin{proposition}
a) The form $\lambda\mathrm{vol}_{g}$ is pre-quantizable if and only if
$\lambda=\frac{n}{4\pi}$ for $n\in\mathbb{Z}$.

b) The form $\lambda\mathrm{vol}_{g}$ is $SO(3)$-equivariant pre-quantizable
if and only if $\lambda=\frac{n}{2\pi}$ for $n\in\mathbb{Z}$.
\end{proposition}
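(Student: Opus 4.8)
The final proposition consolidates the two separate claims about when $\lambda\,\mathrm{vol}_g$ is (ordinarily, resp.\ equivariantly) pre-quantizable on $S^2$. My plan is to treat the two parts using, respectively, the classical Weil--Kostant integrality criterion and the equivariant criterion of the preceding Theorem in this section, feeding in the two explicit computations already displayed in the Example (the integrality condition on $\lambda$ and the value $\max_M(h_X)=2\pi k$).

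For part (a), I would invoke the Weil--Kostant theorem, so that $\lambda\,\mathrm{vol}_g$ is pre-quantizable if and only if its cohomology class is integral. Concretely the criterion reduces to $\int_{S^2}\lambda\,\mathrm{vol}_g\in\mathbb{Z}$. Since the total area $\int_{S^2}\mathrm{vol}_g$ equals $4\pi$ for the standard round metric, this integral is $4\pi\lambda$, and integrality is exactly the condition $\lambda=\tfrac{n}{4\pi}$ for some $n\in\mathbb{Z}$. This is the content already asserted in the Example's normalization, so part (a) is essentially a restatement of integrality once the total volume is pinned down.

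For part (b), I would apply part (a) of the Theorem immediately preceding this Example: since $G=SO(3)$ is compact and connected (hence w-exponential), $M=S^2$ is compact, and $\omega=\lambda\,\mathrm{vol}_g$ is symplectic, the pair $(\omega,\mu)$ is $G$-equivariant pre-quantizable if and only if $\omega$ is integral \emph{and} $\max_M(\lambda h_X)\in\mathbb{Z}$ for every $X\in\ker(\exp_G)$. By part (a), integrality forces $\lambda=\tfrac{n}{4\pi}$. Then I would substitute the Example's computation: for $X\in\ker(\exp_G)$ with $\|\vec v_X\|=2\pi k$ we have $\max_M(h_X)=2\pi k$, so $\max_M(\lambda h_X)=\tfrac{n}{4\pi}\cdot 2\pi k=\tfrac{nk}{2}$. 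The equivariant condition is that $\tfrac{nk}{2}\in\mathbb{Z}$ for every $k\in\mathbb{Z}$; taking $k=1$ shows this holds iff $n$ is even. Writing $n=2m$ gives $\lambda=\tfrac{2m}{4\pi}=\tfrac{m}{2\pi}$, which is exactly the claimed condition $\lambda=\tfrac{n}{2\pi}$ (after relabeling the integer).

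The only genuinely substantive input here is the preceding Theorem, which does the heavy lifting by converting equivariant pre-quantizability into the finite list of conditions $\max_M(\mu_X)\in\mathbb{Z}$; everything else is arithmetic. Accordingly, I expect no real obstacle: the main thing to be careful about is bookkeeping of the normalization constants, namely confirming that the standard volume form has total mass $4\pi$ (so that integrality reads $\lambda=\tfrac{n}{4\pi}$ rather than some other multiple) and that the renaming of integers between the two parts is handled cleanly, so that the parity condition $n$ even in part (b) is correctly repackaged as $\lambda=\tfrac{n}{2\pi}$.
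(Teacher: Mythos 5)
Your proposal is correct and follows essentially the same route as the paper: part (a) is the Weil--Kostant integrality criterion with $\int_{S^2}\mathrm{vol}_g=4\pi$, and part (b) feeds the computation $\max_{M}(\lambda h_X)=\tfrac{nk}{2}$ into the preceding Theorem, with the parity condition on $n$ giving $\lambda=\tfrac{n}{2\pi}$. The only point worth noting is that the Proposition asserts equivariant pre-quantizability of the form $\lambda\mathrm{vol}_g$ itself (not of a pair $(\omega,\mu)$), so strictly one should invoke part (b) of the Theorem with $\mathfrak{T}_{SO(3)}$; but since $\mathfrak{so}(3)$ is semisimple the moment map is unique and $H_{1}(SO(3),\mathbb{Z})\simeq\mathbb{Z}_{2}$ is pure torsion, so $\mathfrak{T}_{G}=\ker\exp_{G}$ and the two criteria coincide, exactly as in your argument.
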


\begin{remark}
Note that if $\lambda=\frac{n}{4\pi}$, then $\lambda\mathrm{vol}_{g}$ is
$SU(2)$-equivariant pre-quantizable because $SU(2)$ is the universal cover of
$SO(3)$.
\end{remark}
\end{example}

\begin{corollary}
If $G$ is a compact and connected Lie group and we have a Hamiltonian action
of $G$ on a compact manifold $(M,\omega)$ such that $\omega$ is integral and
symplectic then we have $\max\mu_{X}(x)\in\mathbb{Q}$ for any $X\in
\mathfrak{T}_{G}$.
\end{corollary}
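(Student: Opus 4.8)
The plan is to combine the two earlier corollaries of Section \ref{Sect Lift} with the torsion computation already carried out in the proof of Lemma \ref{lema b}. First, since $\omega$ is integral, the Weil--Kostant theorem provides a (non-equivariant) pre-quantization bundle $(P,\Xi)$ with $\mathrm{curv}(\Xi)=\omega$; I fix one such bundle, so that $\Delta^{\Xi,\mu}$ is defined for the given moment map $\mu$. Because $M$ is compact and $\omega=\mathrm{curv}(\Xi)$ is symplectic, the earlier corollary giving $\Delta^{\Xi,\mu}(X)=\max_{M}(\mu_{X})\operatorname{mod}\mathbb{Z}$ (equivalently $=\min_{M}(\mu_{X})\operatorname{mod}\mathbb{Z}$) applies for every $X\in\ker\exp_{G}$. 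This already reduces the claim to bounding the denominator of $\Delta^{\Xi,\mu}(X)$ when $X\in\mathfrak{T}_{G}$.

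Next I would invoke the defining property of $\mathfrak{T}_{G}$: for $X\in\mathfrak{T}_{G}$ there exists $n\in\mathbb{N}$ with $nX\in\ker\exp_{\tilde{G}}$. Since $s\mapsto\exp_{\tilde{G}}(sX)$ is a one-parameter subgroup we have $\exp_{\tilde{G}}(nX)=(\exp_{\tilde{G}}X)^{n}$, and since $\Lambda^{\Xi,\mu}\colon\ker\rho_{G}\rightarrow\mathbb{R}/\mathbb{Z}$ is a group homomorphism, the identity $\Delta^{\Xi,\mu}(X)=\Lambda^{\Xi,\mu}(\exp_{\tilde{G}}X)$ yields $n\,\Delta^{\Xi,\mu}(X)=\Delta^{\Xi,\mu}(nX)=\Lambda^{\Xi,\mu}(1_{\tilde{G}})=0$ in $\mathbb{R}/\mathbb{Z}$ — this is exactly the computation used in the proof of Lemma \ref{lema b}. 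Combining with the displayed equality of the previous paragraph gives $n\max_{M}(\mu_{X})\in\mathbb{Z}$, hence $\max_{M}(\mu_{X})\in\frac{1}{n}\mathbb{Z}\subset\mathbb{Q}$, which is the assertion.

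I expect no genuinely hard step here: the statement is a bookkeeping consequence of the machinery already in place. The only two points that deserve care are (i) that integrality of $\omega$ is precisely what guarantees the existence of a connection $\Xi$ with $\mathrm{curv}(\Xi)=\omega$, so that $\Delta^{\Xi,\mu}$ exists at all, and (ii) the passage $n\,\Delta^{\Xi,\mu}(X)=\Delta^{\Xi,\mu}(nX)$, which must be justified by reading $\Delta^{\Xi,\mu}$ through the homomorphism $\Lambda^{\Xi,\mu}$ rather than treating it as a bare $\mathbb{R}/\mathbb{Z}$-valued function. It is worth noting that compactness of $G$ is used only to place us in the framework where $\mathfrak{T}_{G}$ (and the torsion subgroup $T_{G}$) are the pertinent objects; the rationality estimate itself requires only that $M$ be compact and $\omega$ symplectic and integral.
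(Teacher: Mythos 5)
Your proposal is correct and follows exactly the route the paper intends (the corollary is stated without proof, but it is the evident combination of the earlier corollary $\Delta^{\Xi,\mu}(X)=\max_M(\mu_X)\operatorname{mod}\mathbb{Z}$ for compact symplectic $M$ with the computation $n\Delta^{\Xi,\mu}(X)=\Delta^{\Xi,\mu}(nX)=0$ from the proof of Lemma \ref{lema b}). Your two points of care — existence of $(P,\Xi)$ from integrality, and reading $\Delta^{\Xi,\mu}(nX)=n\Delta^{\Xi,\mu}(X)$ through the homomorphism $\Lambda^{\Xi,\mu}$ via $\exp_{\tilde G}(nX)=(\exp_{\tilde G}X)^n$ — are exactly the right ones, and the uniform bound $\max_M(\mu_X)\in\frac{1}{r_G}\mathbb{Z}$ via Corollary \ref{power} would give the same conclusion.
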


We also obtain the following generalization of Example 6.10 in \cite{GGK}:

\begin{theorem}
a) If $G$ is w-exponential and $M^{G}\neq\emptyset$, then $(\omega,\mu)$
is\thinspace$G$-equivariant pre-quantizable if and only if $\omega$ is
integral and for a fixed point $x\in M^{G}$ we have $\mu_{X}(x)\in\mathbb{Z}$
for any $X\in\ker\exp_{G}$.

b) If $G$ is compact and $M^{G}\neq\emptyset$, then $\omega$ is\thinspace
$G$-equivariant pre-quantizable if and only if $\omega$ is integral and for a
fixed point $x\in M^{G}$ we have $\mu_{X}(x)\in\mathbb{Z}$ for any
$X\in\mathfrak{T}_{G}$.
\end{theorem}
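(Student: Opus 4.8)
The plan is to reduce $G$-equivariant pre-quantizability to two independent conditions and to recognize both as instances of results already established in Section \ref{Sect Lift}. The decomposition is this: a $G$-equivariant pre-quantization of $(\omega,\mu)$ consists of a (non-equivariant) pre-quantization bundle $(P,\Xi)$ with $\mathrm{curv}(\Xi)=\omega$, together with a lift of the $G$-action to $P$ leaving $\Xi$ invariant and satisfying $\mathrm{Mom}^{\Xi}=\mu$ (i.e.\ a $(\Xi,\mu)$-lift). The existence of a pre-quantization bundle at all is controlled by Weil--Kostant ($\omega$ integral), and the liftability by the characterizations of Section \ref{Sect Lift}.

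For part a) I would argue both directions at once. If $(\omega,\mu)$ is $G$-equivariant pre-quantizable, then forgetting the action exhibits $\omega$ as pre-quantizable, so $\omega$ is integral; moreover the given $G$-action on $P$ is by construction a $(\Xi,\mu)$-lift, so by the w-exponential lifting theorem $\Delta^{\Xi,\mu}(X)=0\bmod\mathbb{Z}$ for all $X\in\ker\exp_{G}$, which by Proposition \ref{calculo}(e) evaluated at a fixed point $x\in M^{G}$ reads $\mu_{X}(x)\in\mathbb{Z}$. Conversely, if $\omega$ is integral I would invoke Weil--Kostant to pick any pre-quantization $(P,\Xi)$; the hypothesis $\mu_{X}(x)\in\mathbb{Z}$ then yields $\Delta^{\Xi,\mu}(X)=\mu_{X}(x)=0\bmod\mathbb{Z}$ for all $X\in\ker\exp_{G}$, so the w-exponential lifting theorem produces a $(\Xi,\mu)$-lift and hence the desired $G$-equivariant pre-quantization.

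For part b) I would run the identical argument with the moment map no longer fixed, replacing the w-exponential lifting theorem by Theorem \ref{Th torsion}(c), which (using compactness of $G$) says a $\Xi$-lift exists iff $\Delta^{\Xi,\mu}(X)=0$ for all $X\in\mathfrak{T}_{G}$, for any moment map $\mu$. Evaluating at the fixed point via Proposition \ref{calculo}(e) again converts this into $\mu_{X}(x)\in\mathbb{Z}$ for all $X\in\mathfrak{T}_{G}$, and combining with Weil--Kostant gives the stated equivalence.

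The only point demanding care---and the step I would flag as the main obstacle---is ensuring the stated condition is well-posed and independent of auxiliary choices. For the bundle: the fixed-point formula $\Delta^{\Xi,\mu}(X)=\mu_{X}(x)\bmod\mathbb{Z}$ involves only $\mu$ and $x$, not $(P,\Xi)$, so if the condition holds then \emph{every} pre-quantization bundle admits the required lift, which closes the equivalence regardless of which bundle Weil--Kostant supplies. For part b) one must additionally check that $\mu_{X}(x)$ for $X\in\mathfrak{T}_{G}$ does not depend on the choice of moment map; this follows from Lemma \ref{lema b}, since any two moment maps differ by some $b\in H^{1}(\mathfrak{g})$ with $b(X)=0$ on $\mathfrak{T}_{G}$. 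Granting these bookkeeping points, the theorem is simply the synthesis of Proposition \ref{calculo}(e), Theorem \ref{Th torsion}, and the Weil--Kostant integrality criterion, with no genuinely new computation required.
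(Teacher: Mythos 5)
Your proposal is correct and is essentially the paper's intended argument: the paper gives no explicit proof, stating only that the result follows easily from Section \ref{Sect Lift}, and your synthesis of Weil--Kostant integrality with the w-exponential lifting corollary (for a)) and Theorem \ref{Th torsion} (for b)), via Proposition \ref{calculo}(e) at the fixed point, is exactly that deduction. You also correctly flag the two points the paper itself emphasizes as the reason the statement is well-posed --- that $\Delta^{\Xi,\mu}(X)=\mu_{X}(x)\bmod\mathbb{Z}$ at a fixed point is independent of the choice of pre-quantization $(P,\Xi)$, and that $\mu_{X}(x)$ for $X\in\mathfrak{T}_{G}$ is independent of the moment map by Lemma \ref{lema b}.
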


As a consequence of Corollary \ref{power} we obtain the following

\begin{corollary}
If $G$ is a compact and connected Lie group then there exists $r_{G}%
\in\mathbb{N}$ such that for any Hamiltonian action of $G$ on a manifold
$(M,\omega)$ with $\omega$ is integral, the form $r_{G}\cdot\omega$ is
$G$-equivariant pre-quantizable.
\end{corollary}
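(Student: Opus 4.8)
The plan is to chain together the classical Weil--Kostant integrality criterion with Corollary \ref{power}, so that essentially no new computation is required; the only content is the correct bookkeeping of bundles, connections, and curvatures.

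First I would use the hypothesis that $\omega$ is integral. By the theorem of Weil and Kostant recalled in the Introduction, integrality of $\omega$ guarantees the existence of a (non-equivariant) pre-quantization bundle, i.e.\ a principal $U(1)$-bundle $P\rightarrow M$ together with a connection $\Xi$ such that $\mathrm{curv}(\Xi)=\omega$. At this stage the action of $G$ need not lift to $P$; that is exactly the obstruction analysed in the previous sections, and it is why one cannot in general expect $\omega$ itself to be $G$-equivariant pre-quantizable.

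Next I would invoke the hypothesis that the action is Hamiltonian, which is precisely what is needed to apply Corollary \ref{power}. That corollary produces a natural number $r_G$, depending only on the group $G$ and not on $M$, on $\omega$, or on the particular bundle, such that the action admits a $\Xi^{\otimes r_G}$-lift to $P^{\otimes r_G}$. By the definition of such a lift, the action of $G$ lifts to $P^{\otimes r_G}$ by principal bundle automorphisms leaving the connection $\Xi^{\otimes r_G}$ invariant.

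Finally I would compute the curvature of the tensor power. Using the additivity of curvature under tensor products noted just before Corollary \ref{power}, namely $\mathrm{curv}(\Xi_{1}\otimes\Xi_{2})=\mathrm{curv}(\Xi_{1})+\mathrm{curv}(\Xi_{2})$, one obtains $\mathrm{curv}(\Xi^{\otimes r_G})=r_{G}\cdot\mathrm{curv}(\Xi)=r_{G}\cdot\omega$. Thus $(P^{\otimes r_G},\Xi^{\otimes r_G})$ is a $G$-equivariant bundle carrying a $G$-invariant connection whose curvature is $r_{G}\cdot\omega$, which is exactly the assertion that $r_{G}\cdot\omega$ is $G$-equivariant pre-quantizable. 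The only point requiring care, rather than a genuine obstacle, is to confirm that the $r_{G}$ supplied by Corollary \ref{power} is uniform over all Hamiltonian actions, so that the same constant applies to the given $(M,\omega)$; this is guaranteed because Corollary \ref{power} asserts precisely such a universal constant depending on $G$ alone.
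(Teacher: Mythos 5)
Your proposal is correct and follows exactly the route the paper intends: the paper states this corollary as an immediate consequence of Corollary \ref{power}, and your argument (Weil--Kostant integrality to obtain $(P,\Xi)$, Corollary \ref{power} for the $\Xi^{\otimes r_G}$-lift, and additivity of curvature under tensor products to get $\mathrm{curv}(\Xi^{\otimes r_G})=r_G\cdot\omega$) simply spells out the bookkeeping the paper leaves implicit. Nothing is missing.
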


\begin{remark}
As commented above, if $H_{1}(G,\mathbb{Z})$ does not have torsion, then
$r_{G}=1$ and any Hamiltonian action of $G$ on a manifold $(M,\omega)$ with
$\omega$ is integral is $G$-equivariant pre-quantizable.
\end{remark}

\begin{example}
We consider the action of $S^{1}$ on $S^{2}$ by rotations around the $z\ $axis
and the form $\omega=\frac{1}{4\pi}\mathrm{vol}_{g}\in\Omega^{2}(S^{2})$ with
the moment map $\mu_{X}=\frac{1}{4\pi}h_{X\cdot M}$ for $X\in\mathbb{R}$, and
where $M=\left(
\begin{array}
[c]{rrr}%
0 & 0 & 0\\
0 & 0 & 1\\
0 & -1 & 0
\end{array}
\right)  $. For $X=2\pi k\in\ker\exp_{G}=2\pi\mathbb{Z}$ we have $\max_{M}%
(\mu_{X})=\frac{k}{2}$, and hence $(\omega,\mu)$ is not $S^{1}$-equivariant
pre-quantizable. However, as $H_{1}(S^{1},\mathbb{Z)}$ does not have torsion,
we know that $\omega$ is $S^{1}$-equivariant pre-quantizable with another
moment map $\mu^{\prime}=\mu+b$, with $b_{X}=cX$ and $c$ constant. For
example, it is enough to take $c=-\frac{1}{4\pi}$ as then we have $\max
_{M}(\mu_{X}^{\prime})=\frac{k}{2}-\frac{1}{4\pi}2\pi k=0\operatorname{mod}%
\mathbb{Z}$.
\end{example}

The case in which $\Lambda^{\Xi,\mu}(X)$ depends on $\Xi$ is more complicated
because\ it can happens that $(\omega,\mu)$ is\thinspace$G$-equivariant
pre-quantizable but $\Lambda^{\Xi,\mu}\neq0$ (see Example \ref{torus} bellow).
If $(P,\Xi)$ and $(P^{\prime},\Xi^{\prime})$ are two pre-quantizations of
$\omega$, then there exists a group homomorphism\footnote{The connection
$\Xi^{\prime}\otimes\Xi^{-1}$ on the bundle $P^{\prime}\otimes P^{-1}$ is a
flat connection and $\beta$ is the holonomy of $\Xi^{\prime}\otimes\Xi^{-1}$.}
$\beta\colon H_{1}(M,\mathbb{Z})\rightarrow\mathbb{R}/\mathbb{Z}$ such that
$\mathrm{hol}^{\Xi^{\prime}}(\gamma)=\mathrm{hol}^{\Xi}(\gamma)+\beta
([\gamma])$ for any $\gamma\in\mathcal{C}(M)$. Then we have $\Lambda
^{\Xi^{\prime},\mu}(X)=\Lambda^{\Xi,\mu}(X)+\beta([\tau_{x,X}])$. Note that if
the homology class $[\tau_{x,X}]\in H_{1}(M,\mathbb{Z})$ vanishes for any
$X\in\ker\exp_{G}$ then $\Lambda^{\Xi,\mu}(X)$ is independent of $\Xi$. This
happens for example if $H_{1}(M,\mathbb{Z})=0$.

\begin{example}
If in Example \ref{Trivial} we consider a manifold $M$ such that
$H_{1}(M,\mathbb{Z})=0$, then we obtain that $(0,\mu^{c})$ is\thinspace
$G$-equivariant pre-quantizable if and only if $2\pi c\in\mathbb{Z}$.
\end{example}

However, if $H_{1}(M,\mathbb{Z})\neq0$ the situation could be different, and
it could be possible to cancel $\Lambda^{\Xi,\mu}$ with $\beta$:

\begin{example}
\label{torus}We apply Example \ref{Trivial} to the case in which $M=T^{2}$ is
the 2-torus and $G=S^{1}$ acts on $T^{2}$ by rotations. We have $\mathrm{Hom}%
(H_{1}(T^{2},\mathbb{Z}),\mathbb{R}/\mathbb{Z})\simeq\mathbb{R}/\mathbb{Z}%
\oplus\mathbb{R}/\mathbb{Z}$, and if $X=2\pi z$\ then for any $p\in
\mathbb{R}/\mathbb{Z}$ there exists $\beta\in\mathrm{Hom}(H_{1}(T^{2}%
,\mathbb{Z}),\mathbb{R}/\mathbb{Z})$ such that $\beta([\tau_{x,X}])=p$. If we
chose $p=-2\pi zc\operatorname{mod}\mathbb{Z}$ then we have $\Lambda
^{\Xi^{\prime},\mu}(X)=\Lambda^{\Xi,\mu}(X)+\beta([\tau_{x,X}%
])=0\operatorname{mod}\mathbb{Z}$. Hence $(0,\mu^{c})$ is\thinspace
$G$-equivariant pre-quantizable for any $c\in\mathbb{\mathbb{R}}$.
\end{example}


\begin{thebibliography}{9}                                                                                                %


\bibitem {AnomaliesG}R. Ferreiro P\'{e}rez, \emph{On the geometrical
interpretation of locality in anomaly cancellation}\textbf{,} J. Geom. Phys.
133 (2018) 102--112.

\bibitem {EquiHol}---, \emph{Equivariant holonomy of }$U(1)$\emph{-bundles},
Differential Geom. Appl. \textbf{66} (2019), 1--12.

\bibitem {GGK}V. Guillemin, V. Ginzburg, Y. Karshon, \emph{Moment maps,
cobordisms, and Hamiltonian group actions}, Mathematical Surveys and
Monographs, Vol. 98, Amer. Math. Soc., Providence, RI, 2002.,

\bibitem {Gotay}M. J. Gotay and G. M. Tuynman, \emph{A symplectic analogue of
the Mostow-Palais theorem}, Symplectic geometry, groupoids and integrable
systems, MSRI Publ. 20 (ed. P. Dazord and A. Weinstein, Springer, 1989).

\bibitem {Helgason}S. Helgason, \emph{Differential Geometry, Lie Groups and
Symmetric Spaces}, Academic Press Inc. 1978.

\bibitem {Kostant}B. Kostant, \emph{Quantization and unitary representations},
Lectures in modern analysis and applications III, Lecture Notes in Math. 170
(Springer, New York, 1970), 87--208.

\bibitem {Mundet}I. Mundet i Riera, \emph{Lifts of Smooth Group Actions to
Line Bundles}, Bull. London Math. Soc., \textbf{33} (2001), 351--361.
\end{thebibliography}
\end{document}